\newcommand {\norm} [1] { \lVert #1 \rVert}
\newcommand {\abs} [1] {\left| #1 \right|}
\newcommand {\Abs} [1] {\bigl\lvert #1 \bigr\rvert}
\newcommand {\floor}[1]{\lfloor #1\rfloor}
\newtheorem {thm} {Theorem}[section]
\newtheorem {lem} [thm] {Lemma}
\newtheorem {defn}{Definition}[section]
\newtheorem {qstn}{Question}[section]
\numberwithin{equation}{section}
\begin{document}
\title{Complex Singularities and the Lorenz Attractor}

\author{Divakar Viswanath\thanks{Department of Mathematics,
University of Michigan, 530 Church Street, Ann Arbor, MI 48109.
Partly supported by NSF grants
DMS-0407110 and DMS-0715510.} \and S\"{o}nmez \c{S}ahuto\u{g}lu\thanks{Department of Mathematics,
University of Toledo,  Toledo, OH 43606. Partly
supported by NSF grant DMS-0602191.}
}

\label{firstpage}

\maketitle

\begin{abstract}
The Lorenz attractor is one of the best known examples of applied
mathematics. However, much of what is known about it is a result of
numerical calculations and not of mathematical analysis. As a step
toward mathematical analysis, we allow the time variable in the three
dimensional Lorenz system to be complex, hoping that solutions that
have resisted analysis on the real line will give up their secrets in
the complex plane. Knowledge of singularities being fundamental to any
investigation in the complex plane, we build upon earlier work and
give a complete and consistent formal development of complex
singularities of the Lorenz system using {\it psi series}.  The psi
series contain two undetermined constants. In addition, the location
of the singularity is undetermined as a consequence of the autonomous
nature of the Lorenz system. We prove that the psi series converge,
using a technique that is simpler and more powerful than that of
Hille, thus implying a two-parameter family of singular solutions of
the Lorenz system. We pose three questions, answers to which may bring
us closer to understanding the connection of complex singularities to
Lorenz dynamics.\newline
\noindent{\bf Keywords:} Lorenz attractor, psi series, complex singularities.
\newline
\noindent{\bf AMS:} 34M35, 37D45.
\end{abstract}

%\begin{keywords} 
%\end{keywords}

%\begin{AMS}
%\end{AMS}

\section{Introduction}

The nonlinear system of equations
\begin{align}
\frac{dx}{dt} &= 10(y-x) \nonumber\\
\frac{dy}{dt} &= 28 x - y -xz \nonumber\\
\frac{dz}{dt} &= -8z/3 + xy, 
\label{eqn-1-1}
\end{align}
which is named after Lorenz, gives the best known example of a strange
attractor. Lorenz \cite{Lorenz1963, Lorenz1993} derived this system to
argue that the unpredictability of weather is due to the nature of the
solutions of the Navier-Stokes equations and not due to stochastic
terms of unknown origin, his point being that a deterministic system
could possess an attracting and invariant set on which the dynamics is
bounded and linearly unstable. When such strange attractors exist,
trajectories are chaotic and appear random.

 While Lorenz
\cite[p.141, 1963]{Lorenz1963} could write that the atmosphere
was not normally regarded as deterministic, we now know that the
incompressible Navier-Stokes equations by themselves explain a
remarkable wealth of turbulence phenomena including coherent motions
in the near-wall region, the law of the wall, intermittency, and
vortex structures in fully developed turbulence \cite{Davidson2004}.
The density and temperature of the atmosphere vary with altitude,
and there is significant electrical activity in the atmosphere
that is sustained by about $40,\!000$ thunderstorms that occur
around the world in any single day 
\cite[Chapter 9]{FeynmanLeightonSands1970}. If we nevertheless think
that the physics of the atmosphere is deterministic, Lorenz and
his system are partly responsible.

Lorenz's point of view was dynamical. He viewed the state of
\eqref{eqn-1-1} as a point in $R^3$ and its solutions as trajectories
in $R^3$. The dynamical point of view has overwhelmingly dominated
work on the Lorenz system and Lorenz's original paper
\cite{Lorenz1963} has remained an outstanding introduction to
dynamics. In it, a careful reader can find discussions of numerical
errors, of concepts of stability, of symbolic dynamics (aspects of
which Lorenz seems to have rediscovered for himself), of the density
of periodic solutions on the Lorenz attractor, and of the fractal
nature of the Lorenz attractor.

The point of view in this paper, unlike Lorenz's, will be mainly
function theoretic. We view $t$ in \eqref{eqn-1-1} as a complex
variable and $x$, $y$, $z$ as analytic functions of a complex
variable. Our interest is in triples of analytic functions which
satisfy
\eqref{eqn-1-1}.  Our hope is that an investigation in the complex
plane will open a route to the mathematical analysis of the Lorenz
system.

For the most part, we deal with certain singular solutions of the
Lorenz system, which will be introduced momentarily.  As the right
hand side of the Lorenz system \eqref{eqn-1-1} is analytic, every
solution of the Lorenz system admits analytic continuation to the
complex plane. For some solutions, the analytic continuations have
singularities of the form we deal with, as indicated by numerical
results summarized in Section 5. In the second part of this
introduction, we pose three questions to help connect the complex
singularities with Lorenz dynamics.

From residue integration, the method of steepest descent, and the use
of deformation of contours to effect analytic continuation of certain
special functions, we know that knowledge of singularities is often
useful to investigations in the complex plane. This observation
explains our focus on singular solutions of the Lorenz system.

\subsection{Psi series solutions of the Lorenz system}

The most common types of singularities are poles, algebraic branch
points, and logarithmic branch points. The singularities of the
Lorenz system that we examine are of none these types, but are given
by psi series representations.

\begin{defn}
A {\it logarithmic psi series} centered at $t_0$ is a series
of the form $\sum_{n=-N}^\infty p_n(\eta) (t-t_0)^n$, where
$N$ is an  integer, $\eta = \log(b(t-t_0))$ and each $p_n$ is a polynomial
in $\eta$. In the definition of $\eta$, $b$ is a complex number with
$\abs{b} = 1$, with $b = \pm i$ often being convenient choices.
\label{defn-1}
\end{defn}

Throughout this paper, $\log$ will denote the principal branch of
$\log$.  The choice of the branch is ultimately immaterial but taking
$\eta = \log(-i (t-t_0))$ instead of $\eta = \log (t-t_0)$ leads to
more convenient branch cuts if $\Im(t_0) < 0$, as we explain in
Section 3. For a slightly different definition of logarithmic psi
series, along with definitions of psi series of other types, see
\cite[Chapter 7.1]{Hille1976}.  The only type of psi series that
arises in this paper is the type given by Definition \ref{defn-1}, and
by psi series we refer to that definition only.

The psi series of Definition \ref{defn-1} are like the Laurent series,
except that the coefficients are polynomials in $\eta$ instead of
being constants. For that reason, the psi series singularities were
called pseudopoles by Hille \cite{Hille1973}. Even though the
coefficients are polynomials in $\eta$, each nonzero term of the
logarithmic psi series dominates the following term in magnitude in
the limit $t\rightarrow t_0$.

In an intriguing and original pair of papers, Tabor and Weiss
\cite{TaborWeiss1981} and Levine and Tabor \cite{LevineTabor1988}
considered psi series solutions of the Lorenz system
\eqref{eqn-1-1}. The psi series they used were expressed as a double
sum. Below we give the psi series in a different form:
\begin{alignat}{2}
x(t) &= &\frac{P_{-1}(\eta)}{t-t_0} + P_0(\eta) + P_1(\eta) (t-t_0)
+ P_2(\eta) (t-t_0)^2 + \cdots& \nonumber\\
y(t) &= \frac{Q_{-2}(\eta)}{(t-t_0)^2} +
&\frac{Q_{-1}(\eta)}{t-t_0} + Q_0(\eta) + Q_1(\eta) (t-t_0)
+ Q_2(\eta) (t-t_0)^2 + \cdots& \nonumber\\
z(t) &= \frac{R_{-2}(\eta)}{(t-t_0)^2} +
&\frac{R_{-1}(\eta)}{t-t_0} + R_0(\eta) + R_1(\eta) (t-t_0)
+ R_2(\eta) (t-t_0)^2 + \cdots& 
\label{eqn-1-2}
\end{alignat}
Here the $P_i$, $Q_i$, and $R_i$ are polynomials in $\eta$
where 
$\eta = \log(b(t-t_0))$
as in Definition \ref{defn-1}.
As the Lorenz system is autonomous, $t_0$ is an arbitrary complex number.
The fact that the leading powers of $(t-t_0)$ in the three series in
\eqref{eqn-1-2} are $-1$, $-2$, and $-2$ may be guessed by substituting
poles $(t-t_0)^{-\alpha}$, $(t-t_0)^{-\beta}$, $(t-t_0)^{-\gamma}$ for
$x$, $y$, $z$ into the Lorenz system and then solving for $\alpha$,
$\beta$, $\gamma$ by matching the order of the left and right hand
sides \cite{TaborWeiss1981}.  This test-power method
\cite[p. 90]{Hille1976} does not always work and can be tricked into
failing for the Lorenz system with a linear change of variables.

Melkonian and Zypchen \cite{MelkonianZypchen1995} have recast the psi
series of Tabor and Weiss \cite{TaborWeiss1981} into the formalism of
Hille
\cite{Hille1973}. The formal development of psi series that we give in
Section 3 is similar to that of Melkonian and Zypchen
\cite{MelkonianZypchen1995}, 
but improves that of Melkonian and Zypchen 
in two respects.  Firstly, the development in Section 3 shows the
dependence on undetermined constants $C$ and $D$ explicitly, pointing
out the occurrence of $\eta$ and $C$ in the group $(\eta +
C)$. Secondly, we prove that the degrees of $P_{m+1}$, $Q_m$, $R_m$
are given by $\floor{\frac{m+2}{2}}$ for $m=0,1,\ldots$. The proof
hinges on a surprising cancellation for $m=2$. It is important to get
such details fully right if a mathematical theory is to be set up. As
Hille
\cite[p. 68]{Hille1976} pointed out, ``constants of integration
play a remarkable role in the advanced theory of nonlinear DEs.''
In addition, a complete formal calculation is
essential for a fully correct convergence proof.

\begin{table}
\renewcommand{\arraystretch}{1.25}
\begin{center}
\begin{tabular}{c|c|c|c}
$\quad\quad$ $Q_{-2}$, $R_{-2}$ & & $-\frac{1}{5}\,i$& $-\frac{1}{5}$\\\hline
$P_{-1}$, $Q_{-1}$, $R_{-1}$ & $2\,i$  & $2\,i$ & ${\frac {17}{9}}$\\\hline
$P_{0}$, $Q_{0}$, $R_{0}$ & ${\frac {71}{9}}\,i$ & $-{\frac {349}{81}}\,i-{\frac {988}{81}}\,i(\eta+C)$ & ${\frac {1385}{54}}-{\frac {988}{81}}\,(\eta+C)$\\\hline
$P_{1}$, $Q_{1}$, $R_{1}$ & $-{\frac {9880}{81}}\,i(\eta+C)$ & $-{\frac {25991}{108}}\,i+{\frac {64220}{243}}\,i(\eta+C)$ & $-{\frac {211189}{972}}+{\frac {167960}{729}}\,(\eta+C)$\\\hline
$P_{2}$ &  \multicolumn{3}{c}{$-{\frac {2108195}{972}}\,i+{\frac {469300}{243}}\,i(\eta+C)$}
\\\hline
$Q_{2}$ & \multicolumn{3}{c}{$\frac{3}{10}\,i D-{\frac {477319147}{131220}}\,i-{\frac {167831753}{65610}}\,i(\eta+C)-{\frac 
{273676}{2187}}\,i{(\eta+C)}^{2}
$}\\\hline
$R_{2}$ & \multicolumn{3}{c}{$-\frac{1}{5}\, D +{\frac {138959125}{17496}}-{\frac {58846039}{32805}}\,(\eta+C)-{\frac {
1444456}{2187}}\,{(\eta+C)}^{2}$}\\\hline
$P_{3}$ & \multicolumn{3}{c}{$i D-{\frac {96356411}{6561}}\,i(\eta+C)-{\frac {2736760}{6561}}\,i{(\eta+C)}^{2}$}\\\hline
$Q_{3}$ & \multicolumn{3}{c}{$-{\frac {25925844899}{708588}}\,i+{\frac {32}{27}}\, i D-{\frac {
516846814}{59049}}\,i(\eta+C)+{\frac {26636480}{2187}}\,i{(\eta+C)}^{2}
$}\\\hline
$R_{3}$ & \multicolumn{3}{c}{$-{\frac {55}{27}}  D  +{\frac {64036692917}{3542940}}-
{\frac {2458513}{2187}}(\eta+C)+{\frac {813193160}{59049}}{(\eta+C)}^{2}$}\\\hline
$P_{4}$ & \multicolumn{3}{c}{${\frac {25}{54}}\, i D-{\frac {64653009635}{708588}}\,i-{\frac {
107735075}{118098}}\,i(\eta+C)+{\frac {206615500}{6561}}\,i{(\eta+C)}^{2}
$}
\end{tabular}
\end{center}
\caption[xyz]{Coefficients of the psi series of \eqref{eqn-1-2},
with $\eta$ as in Definition \ref{defn-1}.  Evidently, the degrees of
$P_{m+1}$, $Q_m$, $R_m$ are $\floor{\frac{m+2}{2}}$ for $m=0, 1, 2,
3$. The other valid choice for coefficients of \eqref{eqn-1-2} is
obtained by changing the signs of all the $P_i$s and $Q_i$s, while
leaving the $R_i$s unchanged. The constants $C$ and $D$ are both undetermined.}
\label{table-1}
\end{table}

The first few coefficients of the psi series \eqref{eqn-1-2} are
listed in Table \ref{table-1}. It is evident that $\eta$ and $C$
always occur in the group $(\eta+C)$. If $D$ were real, the
coefficients of the polynomials in $(\eta+C)$ listed in that table
would all be either pure imaginary or real.

The following is one of our main theorems. It reappears in a more
specific form in Section 4, where it is proved.

\begin{thm}
The psi series \eqref{eqn-1-2}, some of whose coefficients are listed
in Table \ref{table-1}, satisfy the Lorenz system \eqref{eqn-1-1} in
the disc $\abs{t-t_0} \leq r$ for some $r > 0$, but with the singular
point $t=t_0$ and a branch cut deleted from the disc. The constants
$C$ and $D$ are undetermined.
\label{thm-1-1}
\end{thm}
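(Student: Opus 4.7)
\emph{Proof plan.} The theorem has two separable components: a formal construction of the coefficients (of which Table~\ref{table-1} shows the first few), together with verification that $C$ and $D$ emerge as undetermined constants, and a convergence statement for the resulting series on a slit disc.

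\emph{Formal construction.} Set $u = t-t_0$ and regard $\eta$ temporarily as an independent variable, so that $d/dt$ acts as $\partial_u + u^{-1}\partial_\eta$. Substituting \eqref{eqn-1-2} into \eqref{eqn-1-1} and matching coefficients of $u^n$ gives a triangular recurrence $\mathcal L_n (P_n, Q_n, R_n)^{T} = (F_n, G_n, H_n)^{T}$, where $\mathcal L_n$ is a $3\times 3$ matrix whose entries are first-order operators in $\eta$, and the right-hand side is a polynomial expression in the previously determined coefficients. For almost all $n$ the operator $\mathcal L_n$ is invertible on polynomials and the triple $(P_n, Q_n, R_n)$ is forced; at two resonant values of $n$ the operator admits a one-dimensional cokernel and the corresponding coefficient gains an arbitrary additive constant, producing $C$ and $D$ respectively. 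A third resonance is absorbed by translation in $t_0$, reflecting the autonomous nature of \eqref{eqn-1-1}. The pairing $\eta + C$ is forced by the symmetry $\eta \mapsto \eta + c$ of the reduced system, equivalently, rescaling $b$ by a unimodular constant. For the degree claim $\deg P_{m+1} = \deg Q_m = \deg R_m = \floor{(m+2)/2}$, I induct on $m$: the nonlinear terms $xy$ and $xz$ add $\eta$-degrees while $\partial_\eta$ lowers them, and the surprising cancellation at $m=2$ alluded to in the introduction prevents the degree from outrunning $\floor{(m+2)/2}$; this I would verify by direct computation and then propagate the bound inductively.

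\emph{Convergence.} Factor out the explicit singular tail,
\begin{equation*}
x = \frac{P_{-1}(\eta)}{u} + \widetilde X(u,\eta), \quad y = \frac{Q_{-2}(\eta)}{u^2} + \frac{Q_{-1}(\eta)}{u} + \widetilde Y(u,\eta), \quad z = \frac{R_{-2}(\eta)}{u^2} + \frac{R_{-1}(\eta)}{u} + \widetilde Z(u,\eta),
\end{equation*}
so that $\widetilde X, \widetilde Y, \widetilde Z$ are formal power series in $u$ whose coefficients $A_n(\eta)$ are polynomials of degree $\floor{(n+2)/2}$. Substitution into \eqref{eqn-1-1} turns the problem into an analytic evolution system in $u$ in which $\eta$ enters as a parameter acted on polynomially by $\partial_\eta$. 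I would then run a majorant argument on a normed space of $\eta$-polynomials that respects the degree filtration, obtaining bounds of the form $\abs{A_n(\eta)} \leq K\, B^n\, (1+\abs{\eta})^{\floor{(n+2)/2}}$ uniformly in $\eta \in \mathbb{C}$. Finally, substitute $\eta = \log(bu)$ on the slit disc and use $\abs{\log(bu)} \leq \abs{\log\abs{u}} + \pi$. The tail is dominated by $\sum K\, B^n\, \abs{u}^n\, (1 + \abs{\log\abs{u}} + \pi)^{\floor{(n+2)/2}}$, which converges for $\abs{u}$ sufficiently small because $\abs{u}\,(1+\abs{\log\abs{u}})^{1/2} \to 0$ as $u \to 0$. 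This gives some $r > 0$ as claimed, with the branch cut deleted so that $\log(bu)$ is single-valued.

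\emph{Main obstacle.} The convergence step is the hard part. A direct polydisc bound in $(u,\eta)$ fails because $\abs{\eta}$ is unbounded as $u \to 0$; one must instead bound the $\eta$-polynomials on all of $\mathbb{C}$ with growth tied to their degrees, and then exploit the half-power $\floor{(n+2)/2}$ against the full power $u^n$. This is where the careful degree bookkeeping of the formal part pays off, and it is precisely the simplification over Hille's approach advertised in the abstract: setting up a majorant norm that tracks the degree filtration cleanly, rather than forcing coefficient-by-coefficient estimates as in Hille, is the technical heart of the proof.
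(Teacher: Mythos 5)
Your formal construction matches the paper's Section 3 in all essentials: the resonances of the recursion matrix at $m=0$ and $m=2$ producing $C$ and $D$, the cancellation at $m=2$, and induction for the degree bound. (Two small caveats: the paper only proves the degrees are \emph{at most} $\floor{(m+2)/2}$ (Lemma \ref{lem-3-1}) and explicitly declines to prove equality, which is not needed; and it verifies the base cases $m=0,1,2$ by direct computation rather than by a cokernel argument, but that is cosmetic.)

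The gap is in the convergence step, which you correctly identify as the heart of the matter but then leave as a black box (``run a majorant argument \ldots\ that respects the degree filtration''). Two specific pieces are missing. First, at each order $m$ the coefficient triple $X_m$ is not an algebraic expression in the earlier coefficients: it is the polynomial solution of a linear ODE system $X_m' = A_m X_m + F_m$ in the variable $\eta$, so you must bound the norm of the solution operator $(d/d\eta - \alpha)^{-1}$ on polynomials, uniformly in $m$, where $\alpha$ runs over the eigenvalues $-m+2$, $-m$, $-m-3$. Inverting this operator on a degree-$n$ polynomial produces terms of size $k!\abs{f_k}/\abs{\alpha}^{k-j+1}$, which are controllable only because $n\approx m/2$ while $\abs{\alpha}\approx m$; this is where the half-power degree bound actually earns its keep (the paper's Lemma \ref{lem-4-2}, proved via the Laplace transform, gives $\abs{\xi}\le\tfrac{a}{(a-1)\abs{\alpha}}\abs{f}$ under the hypothesis $\abs{\alpha}\ge a(n+1/2)$). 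Your ``main obstacle'' paragraph instead locates the use of the half-power in the final substitution $\eta=\log(bu)$, but that step is not delicate: $\abs{u}^n$ beats $(1+\abs{\log\abs{u}})^{dn}$ for any fixed $d>0$ once $\abs{u}$ is small, so even a degree growing like $n$ would do there. Second, the recursion for the coefficient norms contains the quadratic convolution $\sum_j\abs{X_{m-j-1}}\abs{X_{j-1}}$ coming from the $xz$ and $xy$ terms; to extract the geometric bound $\abs{X_m}\le K_1K_2^m$ you must close this nonlinear recursion, which the paper does by forming the generating function of a majorizing sequence, showing it satisfies a quadratic functional equation, and invoking the implicit function theorem together with the Cauchy--Hadamard formula (Lemma \ref{lem-4-4}). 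Neither piece is routine bookkeeping; together they are precisely the advertised simplification over Hille. A last, smaller omission: once the series are shown to converge on the slit disc, one must still justify that they satisfy \eqref{eqn-1-1} (term-by-term multiplication and differentiation of the psi series), which the paper handles via absolute convergence and the uniform convergence of derivatives of analytic functions.
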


The proof of this theorem is valid for any choice of the undetermined
constants $C$ and $D$, but the estimate for $r$ depends upon the
choice. A key step in its proof is to show the convergence of the psi
series.

An important aspect of the convergence of the Lorenz psi
series is not brought out in Theorem \ref{thm-1-1}. As evident
from the appearance of $\eta$ in Definition
\ref{defn-1}, a typical psi series will have logarithmic branch points
in the $t$-plane. 
To get around the multiple-valuedness, Theorem \ref{thm-1-1} fixes
a branch cut in the $t$-plane. The branch cut can be dispensed with by 
parametrizing the Riemann surface using $\eta$. A discussion
of convergence in the $\eta$-plane is found in Section 4 (see
Figure \ref{fig-4} in particular).

Hille's ``frontal attack'' to prove convergence of psi series can be
modified to apply to the Lorenz system \cite{Hille1973,
MelkonianZypchen1995}. 
In an
appendix, Hille
\cite{Hille1974} pointed out that his technique could only handle the
Emden-Fowler system (see Section 2) with $p=2$, while a more
complicated technique due to Smith \cite{Smith1975} could handle
$p=2,3,\ldots$. The technique we use in Section 4.1 is also a frontal
attack, but it is a good deal more transparent than Hille's approach.
In place of an elaborate analytic set up and an inductive hypothesis
to bound the coefficients of the psi series, we use the Laplace
transform, elementary combinatorics, and an elementary implicit
function theorem. Our technique seems to extend to all the cases
handled by Smith
\cite{Smith1975}. Detailed comments on this point are found in Section
4.2.

\subsection{Complex singularities and Lorenz dynamics: three questions}
From Theorem \ref{thm-1-1} we get a two-parameter family of
singular solutions of the Lorenz system \eqref{eqn-1-1}. The form of
the singular solutions is given by the psi series \eqref{eqn-1-2} and
the two undetermined constants $C$ and $D$ are shown in Table
\ref{table-1}. The location $t_0$ of the singularity can be anywhere
in the complex $t$-plane. 

For some definite integrals, the
singularities of the integrand and Cauchy's residue theorem imply the
value of the integral. So we ask, what do the singular solutions of the
Lorenz system tell us about the dynamics in $R^3$ for real time? As 
the analytic theory of solutions of the Lorenz system is still in its
infancy, a complete answer to the question cannot be given. Nevertheless,
the question merits a thorough discussion.

\begin{figure}
\begin{center}
\includegraphics[scale=0.3]{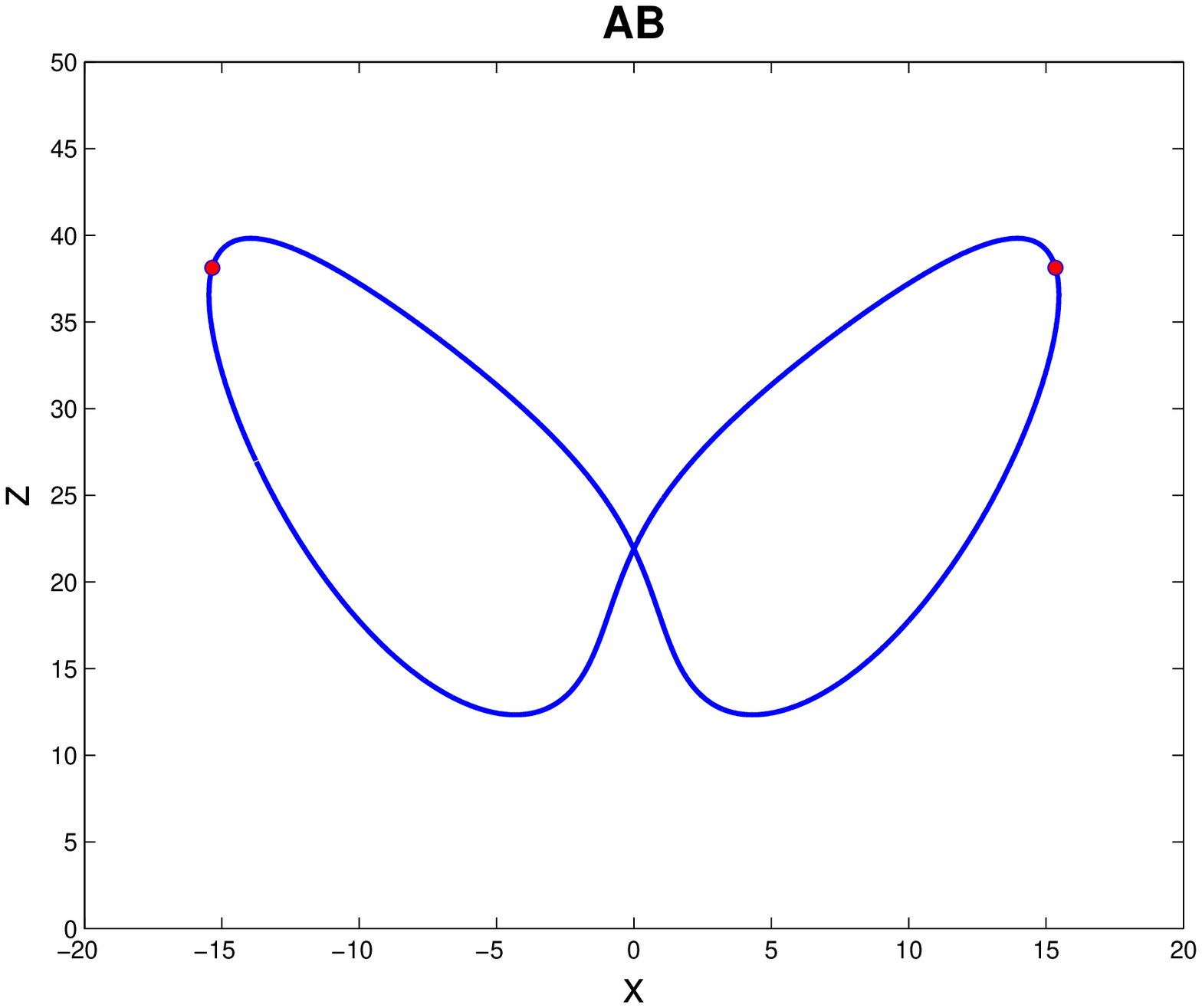}
\includegraphics[scale=0.3]{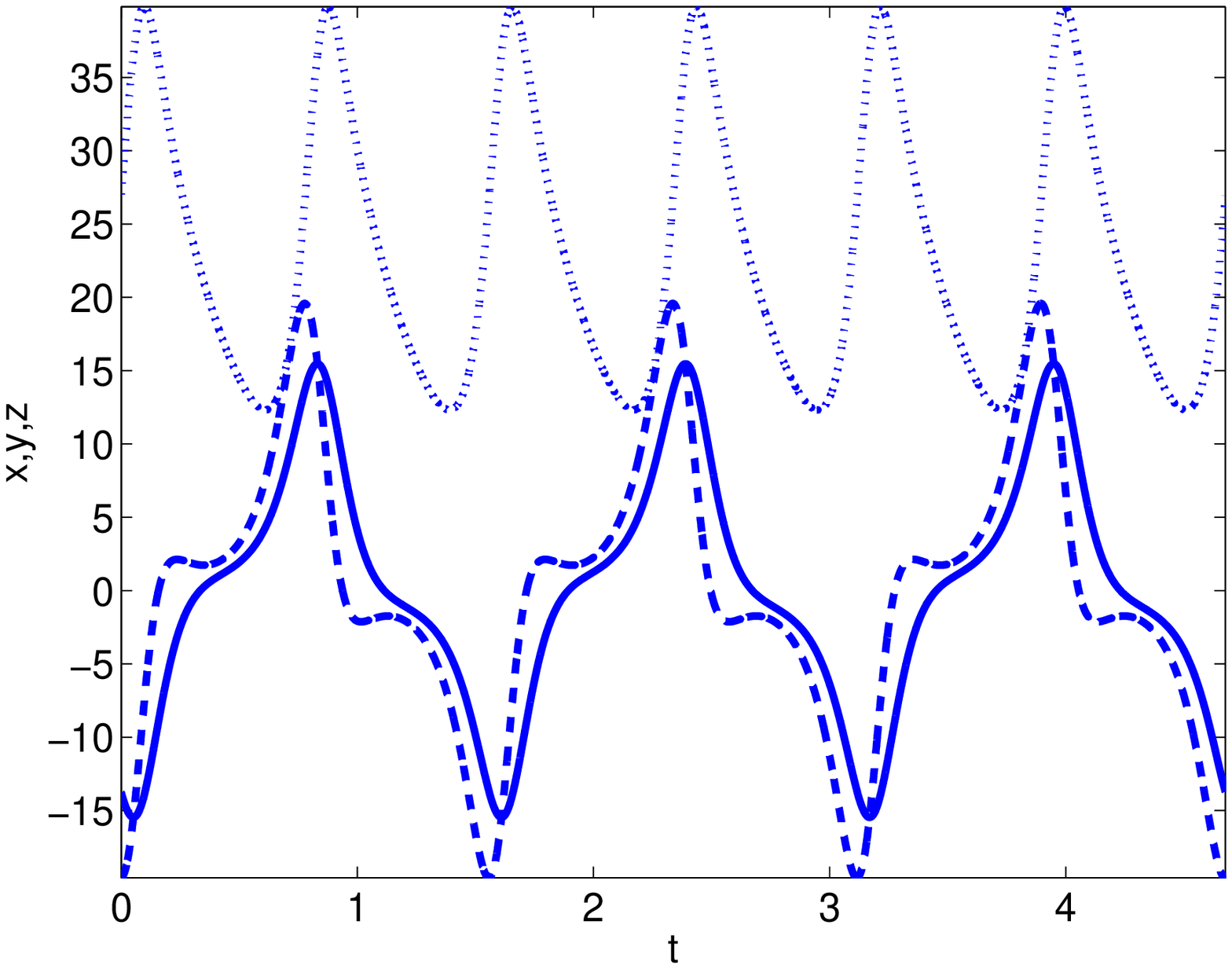}
\includegraphics[scale=0.3]{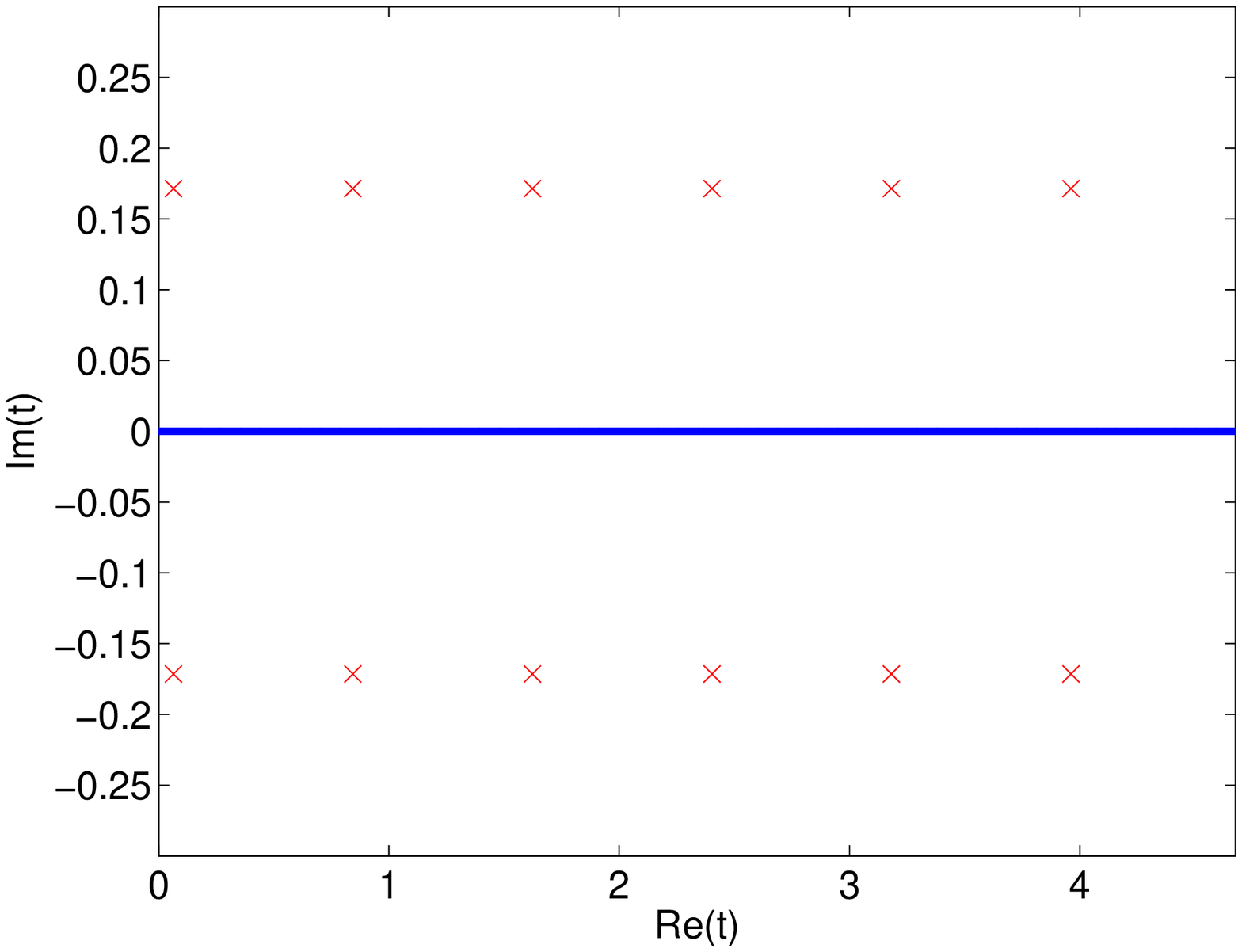}
\end{center}
\caption[xyz]{The periodic orbit in the first plot is labeled $AB$ to
indicate the sequence in which it moves between the $A$
quadrant (with $x<-16.432$, $y<-16.432$, $z=27$) and the $B$ quadrant
(with $x>16.432$, $y>16.432$, $z=27$).
Each filled circle is directly below
a singularity in the complex $t$-plane. In the middle are plots
of $x(t)$ (solid), $y(t)$ (dashed), $z(t)$ (dotted)
against real $t$. In the rightmost plot, the location of the
complex singularities of $AB$ that are closest to the real line
are marked as crosses.The orbit $AB$ is computed with $547$
digits of precision.
}
\label{fig-1}
\end{figure}

Many beautiful visualizations of the Lorenz attractor are found on the
INTERNET. The visualizations originally offered by Lorenz
\cite{Lorenz1963} are packed with information and are models of
concision. The Lorenz attractor is a butterfly-like subset of
$R^3$. Except for the fixed points, all trajectories either approach
the attractor as $t\rightarrow \infty$ or are already on it.

Figure \ref{fig-1} shows the periodic orbit labeled $AB$, which
resides on the attractor.  A great advantage of computing such orbits,
as opposed to arbitrary trajectories, is that the computations take on
a definite character that makes it possible to report them
precisely. As already mentioned at the beginning of this introduction,
periodic orbits are believed to be dense in the Lorenz
attractor. Such orbits can be computed with great precision. The
locations of the complex singularities shown in the rightmost plot of
Figure \ref{fig-1} were obtained by computing the orbit $AB$ with more
than $500$ digits of precision.

\begin{figure}
\begin{center}
\includegraphics[height=1.25in, width=5.8in]{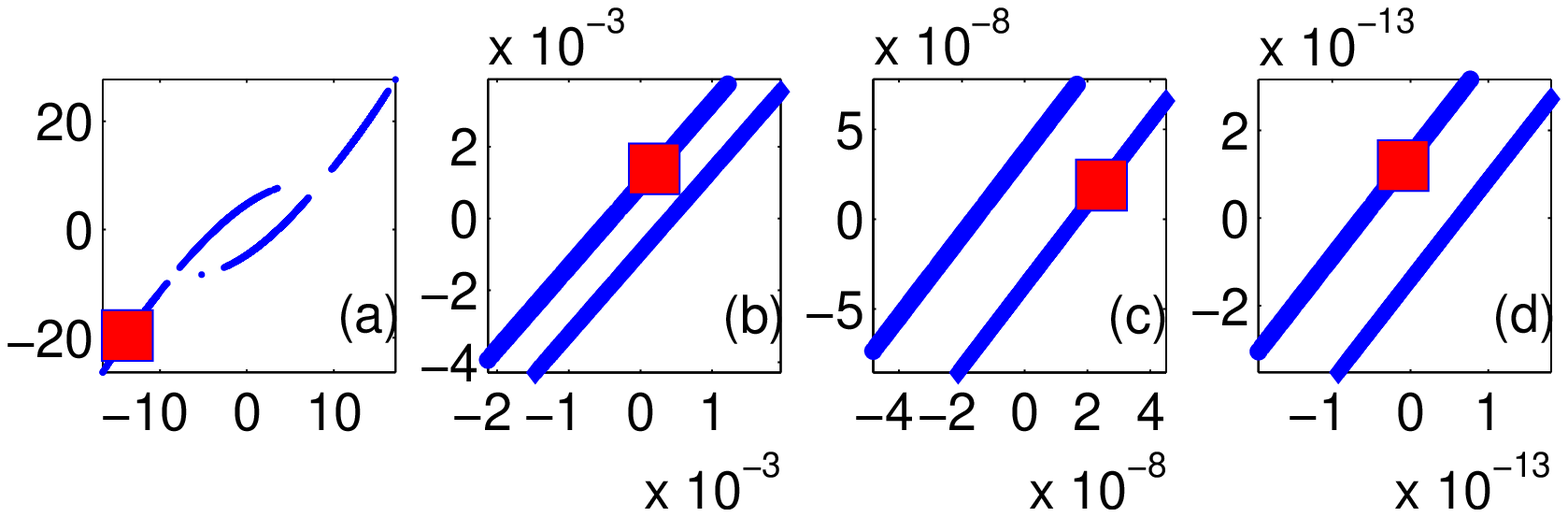}\\
\vspace*{.25in}
\includegraphics[height=1.25in, width=5.8in]{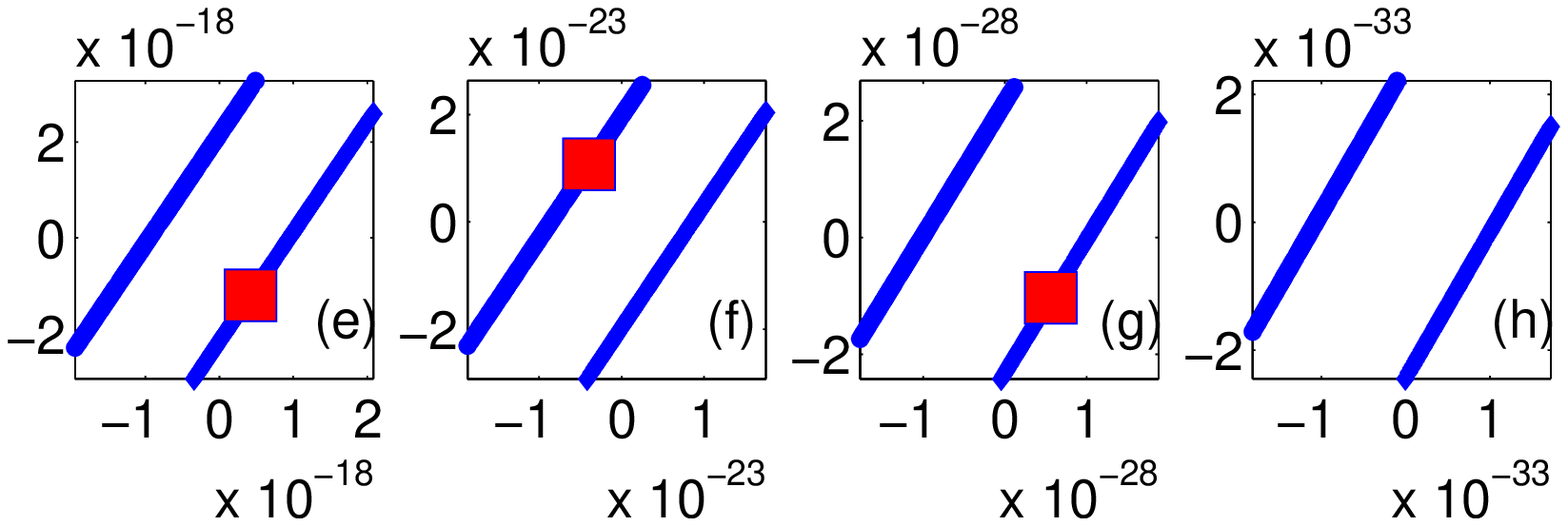}
\end{center}
\caption[xyz]{
Fractal property of the Lorenz attractor.  (a): The intersection of an
arbitrary trajectory on the Lorenz attractor with the section
$z=27$. The plot shows a rectangle in the $x$-$y$ plane.  All later plots ((b)
and above) zoom in on a tiny region (too small to be seen by the
unaided eye) at the center of the red rectangle of the preceding plot
to show that what appears to be a line is in fact not a line.  These
plots, and the plots of \cite{Viswanath2003,Viswanath2004} of which these plots
are a refinement, appear to be the only plots made of the fractal
structure of the Lorenz attractor.}
\label{fig-2}
\end{figure}

A worthy goal for the analytic theory of the Lorenz system is a proof
of existence of periodic solutions $(x(t), y(t), z(t))$ of the Lorenz system
\eqref{eqn-1-1}, where we seek a proof that is based solely on mental conceptions.
There is a definiteness to seeking periodic solutions as already
pointed out. In addition, periodic orbits are key to extracting order
from chaos, to borrow an expression from Strogatz \cite{Strogatz1994}.
For instance, Figure \ref{fig-2}, which illustrates the fractal
property of the Lorenz attractor, was obtained by computing periodic
orbits. The plots were computed in parallel on a machine with two
quadcore 2.33 GHz Xeon processors. The plots took a day or two of
computing. For the theory behind such computations, see
\cite{Viswanath2003} and \cite{Viswanath2004}.

A proof of existence of periodic solutions of the Lorenz system
\eqref{eqn-1-1} appears to be far away. We formulate three questions to
serve as more immediate goals for the development of the analytic
theory of the Lorenz system.

\begin{qstn}
Are all singular solutions of the Lorenz system given by psi series
expansions \eqref{eqn-1-2} with suitable choice of the undetermined
constants $C$ and $D$?
\label{qstn-1}
\end{qstn}

The role of the undetermined constants $C$ and $D$ is partly shown in
Table \ref{table-1}. Their role in the psi series is clarified further
in Sections 3 and 4. Lorenz \cite{Lorenz1963} gave arguments that
partially imply that a real solution of the Lorenz system cannot
become singular in finite time. The implication covers both increasing
and decreasing time. In Section 5, we give a complete proof of that
result.  Thus for solutions of the Lorenz system that are real for
real $t$, the locations $t_0$ of the complex singularities must have a
nonzero imaginary part. In fact, Foias and others
\cite[Theorem 2.3]{FoiasJollyKukavicaTiti2001} have proved that for
solutions {\it on} the Lorenz attractor the imaginary part of the
location of the singularity in the complex $t$-plane must exceed
$0.037$ in magnitude.  For an investigation of the backward in time
behavior of the Lorenz system (for real data), see the paper by Foias
and Jolly
\cite{FoiasJolly2005}.

The techniques used to deduce psi series solutions of the Lorenz
system are not of much use for answering Question
\ref{qstn-1}. However, if $t_0$ is {\it any} singular point of the Lorenz
system, then $\abs{x(t)} + \abs{y(t)} + \abs{z(t)}
\rightarrow \infty$ as $t\rightarrow t_0$, as implied by a slightly
stronger theorem proved in Section 5.

For analytic functions such as the gamma and zeta functions,
analytic continuation into the complex plane is an important
step in understanding the true nature of those functions
\cite{Olver1997}. The question of analytic continuation is important
in the theory of differential equations in the complex plane as well
\cite{Hille1976}. These observations motivate us to ask the following
question.

\begin{qstn}
Do solutions of the initial value problem for the Lorenz system 
with $(x(0), y(0), z(0))$ being finite (but possibly complex)
admit of analytic continuation to the entire complex $t$-plane
except for branch points?
\label{qstn-2}
\end{qstn}

An affirmative answer to Question \ref{qstn-1} appears to imply an
affirmative answer to Question \ref{qstn-2}. The process of analytic
continuation can be blocked by singularities. But if all singularities
are given by psi series of the form \eqref{eqn-1-2}, Theorem
\ref{thm-1-1} implies that we can continue around any such singularity
into a disc of finite radius around that singularity (radius is $r$ in
the theorem). The possibility where a succession of psi series
singular solutions of decreasing radii of convergence accumulate on
another singular point is easily ruled out, if the answer to Question
\ref{qstn-1} is yes.

Singular solutions given by psi series representations exist for
plane quadratic systems as well as plane polynomial systems
\cite{Hille1974, Smith1975}. Such planar systems certainly
cannot exhibit chaos \cite{Strogatz1994}. The dynamics of planar
systems is tightly circumscribed by results such as the
Poincar\'{e}-Bendixson theorem.  Unlike the Lorenz system, the planar
systems considered by Hille \cite{Hille1974} and Smith
\cite{Smith1975} can have real solutions that develop singularities
in finite time.  Yet one is probably justified in thinking the
mere existence of singular solutions represented by psi series is
unlikely to tell us anything about the chaotic nature of the Lorenz
system.

This is perhaps the place to comment on the three free parameters with
which the Lorenz system is usually written, but which are given the
values used by Lorenz \cite{Lorenz1963} in \eqref{eqn-1-1}.  The three
parameters correspond to the Rayleigh number, the Prandtl number, and
the system size for the convection PDE from which the Lorenz system
was derived. With regard to the choice of these parameters, there are
three cases for which the Lorenz system admits a Laurent series as a
solution \cite{Segur1982, TaborWeiss1981}. There are five other cases,
due to Segur \cite{Segur1982} and Ku\'{s} \cite{Kus1983}, for which
time-dependent integrals of motion are known.  In their pioneering
work, Tabor and Weiss \cite{TaborWeiss1981} considered the connection
between integrability and the type of the singularities. For another
discussion of the connection between psi series and integrability, see
\cite{DelshamsMir1997}.

In addition to the integrable cases, there are a number of other
regions in parameter space where the Lorenz system has non-chaotic
dynamics yet admits singular solutions with psi series representation.
In these instances, it is quite possible that even though the
real-valued dynamics is non-chaotic, more varied solutions
exist when complex numbers are allowed.  In the case of plane
polynomial systems, although the differential equations cannot have
chaotic solutions that are real
\cite{CoddingtonLevinson1955,Strogatz1994}, the equations may have chaotic
solutions that are complex.

It is not entirely clear how the nature of the singularity can be
connected to chaotic dynamics. It is perhaps significant that only
real solutions have a bearing on dynamics. Therefore we ask the
following question.

\begin{qstn}
If a psi series solution of the Lorenz system \eqref{eqn-1-1} of the
form
\eqref{eqn-1-2} is obtained by analytic continuation of a solution
that is real for real $t$, what constraints must $C$, $D$
and $t_0$ satisfy?
\label{qstn-3}
\end{qstn}

The detailed development of psi series found in Section 3 and partly
shown in Table \ref{table-1} could help answer this question.
Numerical computations are also likely to be useful.  A suspicion of
ours is that the undetermined constant $D$ is real for the psi series
singularities of Question \ref{qstn-3}.

\section{A brief history of early work on psi series}

The equation of Briot and Bouquet
\begin{equation}
t \frac{dw}{dt} = p t + w + F(t, w),
\label{eqn-2-1}
\end{equation}
where $F$ is a polynomial with quadratic and higher terms, seems to be
the simplest differential equation whose singularities are given by
psi series. Dulac \cite[p. 368, 1912]{Dulac1912} and Malmquist
\cite[p. 19, 1921]{Malmquist1921} (also see Theorem 11.3.1 of \cite{Hille1976})
proved that the general solution of \eqref{eqn-2-1} around $t=0$ is
given by a convergent psi series if $p$ is a positive integer. For 
generalizations to higher order Briot-Bouquet equations, 
see \cite{Iwano1963}.

In the last decade of his life, Einar Hille \cite{Hille1970,
Hille1973, Hille1974, Hille1976} interested himself in the
Emden-Fowler equation $d^2y/dt^2 = t^{-2/p} y^{1+2/p}$ with $p>1$
being a positive integer. The Emden-Fowler equation originally arose
in cosmology.  The special case $p=2$ is the Thomas-Fermi equation,
which arose in atomic physics. After sixty years of encounters with
differential equations, Hille wrote a splendid book on ordinary
differential equations in the complex plane \cite[1976]{Hille1976}.
The last chapter of that book gives an outline of the work of 
Hille and Russell A. Smith \cite{Smith1975} on psi series
singularities of the Emden-Fowler equation. The techniques involved
are highly relevant to the Lorenz system. In Section 4, we point out
that some of the theorems of Hille and Smith admit simpler proofs
using an approach introduced in that section.

 From Hille's illuminating bibliographic discussions
\cite{Hille1976}, it is clear that
Dulac
\cite[1934]{Dulac1934} was a central figure with regard to
psi series, with Horn \cite[1905]{Horn1905} being another early
contributor.  Hille does not mention Dulac's claim about one of the
Hilbert problems, however, and indeed that claim was mistaken
\cite{Ilyashenko2002}. It appears that the error was related to a
subtlety in the interpretation of psi series in the complex plane
\cite{Ilyashenko2002}.

\section{Formal development}
The formal development of psi series has a history that goes back a
hundred years or more.  All formal developments proceed in a similar
way---one begins with psi series and then determines their
coefficients using a recursion. 
In two of his papers, Hille
\cite{Hille1973, Hille1974}
gave clear and detailed formal developments.  Our derivation is quite
similar, but is more careful about subtleties such as the choice of
the branch of $\log$, the degrees of the polynomials $P_i$, $Q_i$ and
$R_i$ in
\eqref{eqn-1-2}, and  the role of the undetermined
constants ($C$ and $D$ in Table \ref{table-1}).

Since the Lorenz system \eqref{eqn-1-1} is autonomous, the
choice of the location $t_0$ of the singularity is arbitrary. 
For the sake of definiteness and because the primary interest is
in solutions that are real for real $t$, we may assume $\Re(t_0) < 0$.
and take $\eta = \log(-i(t-t_0))$ to obtain
a branch cut that does not intersect the real axis. However, nothing
changes if $t_0$ is arbitrary and some other branch cut is chosen
for defining $\eta$. The choice of branch cut is equivalent to the
choice of $b$ in Definition \ref{defn-1}.

The form of the singularity is
assumed to be given by \eqref{eqn-1-2}:
\begin{equation}
x(t) = \sum_{m=-1}^{\infty} P_m(\eta)(t-t_0)^m \quad
y(t) = \sum_{m=-2}^{\infty} Q_m(\eta)(t-t_0)^m \quad 
z(t) = \sum_{m=-2}^{\infty} R_m(\eta)(t-t_0)^m, 
\label{eqn-3-1}
\end{equation}
where the $P_m$, $Q_m$ and $R_m$ are polynomials in $\eta$. We arrived
at this form based on numerical work summarized in Section 5. However,
the credit for discovering the form of the psi series singularities of
the Lorenz system belongs for the most part 
to Tabor and Weiss \cite{TaborWeiss1981}.

Substituting \eqref{eqn-3-1} into \eqref{eqn-1-1} and denoting
derivatives with respect to $\eta$ by a prime, we get
\small
\begin{align}
\sublabon{equation}
\sum_{m=-1}^\infty (P'_m(\eta) + m P_m(\eta))(t-t_0)^{m-1} &=
10 Q_{-2}(t-t_0)^{-2} + \sum_{m=-1}^\infty(10Q_m(\eta) - 10 P_m(\eta))
(t-t_0)^m
\label{eqn-3-2a}\\
\sum_{m=-2}^\infty (Q'_m(\eta) + m Q_m(\eta))(t-t_0)^{m-1} &=
28\sum_{m=-1}^\infty P_m(\eta)(t-t_0)^m
-\sum_{m=-2}^\infty Q_m(\eta)(t-t_0)^m \nonumber\\
&\hspace*{.8in}-\sum_{m=-3}^\infty\Biggl(\sum_{j=-1}^{m+2}P_j(\eta)R_{m-j}(\eta)\Biggr)
(t-t_0)^m
\label{eqn-3-2b}\\
\sum_{m=-2}^\infty (R'_m(\eta) + m R_m(\eta))(t-t_0)^{m-1} &=
-\frac{8}{3}\sum_{m=-2}^\infty R_m(\eta) (t-t_0)^m\nonumber\\
&\hspace*{.8in}+\sum_{m=-3}^\infty\Biggl(\sum_{j=-1}^{m+2}P_j(\eta)Q_{m-j}(\eta)\Biggr)
(t-t_0)^m
\label{eqn-3-2c}
\end{align}
\sublaboff{equation}
\normalsize
For the psi series on either side of
(3.2), a nonzero term with $m=m_1$ is greater in magnitude 
than an $m=m_2$ term in the limit $t\rightarrow t_0$ if $m_1 <
m_2$. Therefore it is formally consistent to equate powers of
$(t-t_0)$ in increasing order.

Equating coefficients of $(t-t_0)^{-2}$ in \eqref{eqn-3-2a} and of
$(t-t_0)^{-3}$ in \eqref{eqn-3-2b} and \eqref{eqn-3-2c}, we get
$P'_{-1}-P_{-1} = 10 Q_{-2}$, $Q'_{-2}-2Q_{-2} = - P_{-1}R_{-2}$, and
$R_{-2}' - 2 R_{-2} = P_{-1}Q_{-2}$. The degree of $P_{-1}$ and $Q_{-2}$
in $\eta$ must be the same, while the degree of $R_{-2}$ must be
twice that degree and the degree of $Q_{-2}$ must be the sum
of the degrees of the other two. The only possibility is for 
all the degrees to be zero. We get
\begin{equation}
(P_{-1}, Q_{-2}, R_{-2}) = (2i, -i/5, -1/5) \quad\quad \text{or}\quad\quad
(-2i, i/5, -1/5).
\label{eqn-3-3}
\end{equation}
We consider only the first possibility for now, but will account for
the second possibility in Lemma \ref{lem-3-2}.

The next set of equations is $P'_0 = 10(Q_{-1}-P_{-1})$,
$Q'_{-1} = Q_{-1} - 2iR_{-1} + P_0/5 + i/5$, and 
$R'_{-1} = R_{-1} + 2iQ_{-1} - i P_0/5 + 8/15$. The only solution
polynomial in $\eta$ is given by
\begin{equation}
(P_0, Q_{-1}, R_{-1}) = (71i/9, 2i, 17/9).
\label{eqn-3-4}
\end{equation}

For $m=0,1,2,\ldots$, we equate powers of $(t-t_0)^m$ in \eqref{eqn-3-2a}
and powers of $(t-t_0)^{m-1}$ in \eqref{eqn-3-2b} and \eqref{eqn-3-2c}
to get,
\begin{equation}
X'_m = A_m X_m + F_m(\eta),
\label{eqn-3-5}
\end{equation}
where \small
\begin{equation}
X_m = \begin{pmatrix} P_{m+1} \\ Q_m\\R_m \end{pmatrix},\quad
A_m = \begin{pmatrix} -m-1 & 10 & 0\\
\frac{1}{5} & -m & -2i \\ -\frac{i}{5} & 2i & -m 
\end{pmatrix},\quad
F_m = \begin{pmatrix}
-10 P_m \\
28P_{m-1}-Q_{m-1}-\sum_{j=0}^m P_jR_{m-j-1}\\
-\frac{8}{3}R_{m-1} + \sum_{j=0}^m P_j Q_{m-j-1}
\end{pmatrix}.
\label{eqn-3-6}
\end{equation}
\normalsize
The eigenvalues of $A_m$ are $-m+2$, $-m$, and $-m-3$. If the
linear system \eqref{eqn-3-5} is diagonalized  using the eigenvectors
of $A_m$ as a basis, it turns into three scalar equations of
the form $d\xi/d\eta = \alpha \xi + f(\eta)$, with $\alpha$ being
$-m+2$ or $-m$ or $-m-3$ and with $f$ being a polynomial in each case.
If $\alpha \neq 0$, we have a unique polynomial solution for $\xi(\eta)$
whose degree is the same as that of $f$. 

We can have $\alpha = 0$ if and only if $m=0$ or $m=2$. Thus if
$m\neq 0$ and $m\neq 2$, we can assert that \eqref{eqn-3-5} has
a unique polynomial solution $X_m$ and the degree of that solution
in $\eta$ is the same as that of $F_m$.

In the case $m=0$, $F_m$ is a constant and the three scalar
equations are of the form $d\xi/d\eta =  2\xi + \beta_1$,
$d\xi/d\eta = -3\xi + \beta_2$ and $d\xi/d\eta = \beta_3$, where
the $\beta_i$ are known constants. The only admissible solution of
either of the first two equations is a constant. The last equation
however has the solution $\beta_3(\eta + C)$, where $C$ is an
undetermined constant. If the eigenvectors of $A_0$ are
multiplied by the respective solutions and summed, we get
\begin{equation}
\begin{pmatrix}
P_1\\Q_0\\R_0
\end{pmatrix}
=
\begin{pmatrix}
-9880i/81\\-988i/81\\-988/81
\end{pmatrix} (\eta+C)
+\begin{pmatrix}
0\\ -349i/81 \\ 1385/54
\end{pmatrix},
\end{equation}
where the factor multiplying $(\eta+C)$ is the eigenvector of $A_0$
that corresponds to the eigenvalue $-m=0$.

The matrix $A_m$ has a zero eigenvalue again when $m=2$. In this case,
the degree of $F_m$ in $\eta$ is $2$. We would expect the polynomial
solution $X_m$ of \eqref{eqn-3-6} to be cubic. However, the component
of $F_m$ along the eigenvector of $A_m$ corresponding to the
eigenvalue $-m+2=0$ is zero (with regard to this point compare (2.9)
of
\cite{TaborWeiss1981}). Therefore $P_3$, $Q_2$ and $R_2$, which
make up $X_2$, are all quadratic in $\eta$ as shown in Table
\ref{table-1}. 
A new undetermined constant $D$ enters at this stage.  If $P_3$,
$Q_2$ and $R_2$ were
cubic and not quadratic, $\floor{\frac{m+2}{2}}$ in the lemma below
would be replaced by $\floor{\frac{m+2}{2}} + \floor{\frac{m+2}{4}}$.

\begin{lem}
The degrees of the polynomials $P_{m+1}(\eta)$, $Q_m(\eta)$ and $R_m(\eta)$
are at most $\floor{\frac{m+2}{2}}$ for $m=0,1,2,\ldots$.
\label{lem-3-1}
\end{lem}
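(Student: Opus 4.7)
The plan is to use strong induction on $m$, driven entirely by the recursion \eqref{eqn-3-5}--\eqref{eqn-3-6}. The base cases $m=0,1,2$ would be handled by inspection of Table \ref{table-1}; the only subtle case is $m=2$, since $A_2$ has $0$ among its eigenvalues $-m+2,-m,-m-3$, so \emph{a priori} the polynomial solution of \eqref{eqn-3-5} could pick up an extra factor of $\eta$ and attain degree $\deg F_2 + 1 = 3$. The ``surprising cancellation'' noted just above the lemma is precisely the statement that the component of $F_2$ along the eigenvector of $A_2$ belonging to the eigenvalue $0$ vanishes, so the scalar equation in that direction reduces to $d\xi/d\eta = 0$, whose only polynomial solution is the arbitrary constant $D$. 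I would verify this cancellation by a single direct computation (compare (2.9) of \cite{TaborWeiss1981}).

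For the inductive step, fix $m \geq 3$ and assume the bound for all smaller indices. After reindexing, the hypothesis reads $\deg P_k \leq \floor{\frac{k+1}{2}}$ for $1 \leq k \leq m$ and $\deg Q_k, \deg R_k \leq \floor{\frac{k+2}{2}}$ for $0 \leq k \leq m-1$, with the seed data $P_{-1},P_0,Q_{-2},Q_{-1},R_{-2},R_{-1}$ all of degree $0$. I would then bound $\deg F_m$ term by term from the formula in \eqref{eqn-3-6}. The linear contributions $P_m,\,P_{m-1},\,Q_{m-1},\,R_{m-1}$ are clearly within the target, so the only work is the two convolutions $\sum_{j=0}^m P_j R_{m-j-1}$ and $\sum_{j=0}^m P_j Q_{m-j-1}$. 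Setting $a=j+1$ and $b=m-j+1$, so that $a+b=m+2$, the inductive hypothesis together with the elementary identity $\floor{a/2}+\floor{b/2}\leq\floor{(a+b)/2}$ gives
\[
\deg(P_j R_{m-j-1}) \leq \floor{\tfrac{j+1}{2}} + \floor{\tfrac{m-j+1}{2}} \leq \floor{\tfrac{m+2}{2}},
\]
uniformly in $j$, with the boundary cases $j=0$ and $j=m$ absorbed into the same bound since $P_0$ and $R_{-1}$ are constants. The convolution with $Q$ is identical, so $\deg F_m \leq \floor{\frac{m+2}{2}}$.

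Finally, for $m \geq 3$ all three eigenvalues of $A_m$ are nonzero, so diagonalizing \eqref{eqn-3-5} in the eigenbasis produces three decoupled equations $d\xi/d\eta = \alpha\xi + f(\eta)$ with $\alpha \neq 0$. Each such equation admits a unique polynomial solution whose degree equals $\deg f$ (by matching leading coefficients on the two sides). Reassembling the three scalar solutions through the eigenvector basis yields $\deg X_m \leq \deg F_m \leq \floor{\frac{m+2}{2}}$, closing the induction. The main obstacle is not this inductive step, which is a routine convolution estimate combined with a nonzero-eigenvalue argument, but rather the cancellation at $m=2$: it is not forced by any eigenvalue count or dimension argument and relies on the specific structure of the Lorenz nonlinearity, which is why the case $m=2$ must be confirmed by a separate explicit computation rather than deduced in passing.
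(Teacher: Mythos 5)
Your proposal is correct and follows essentially the same route as the paper's proof: base cases $m=0,1,2$ verified from Table \ref{table-1} (with the $m=2$ cancellation handled by explicit computation), then strong induction for $m\geq 3$ using the convolution bound $\floor{\frac{j+1}{2}}+\floor{\frac{m-j+1}{2}}\leq\floor{\frac{m+2}{2}}$ on $\deg F_m$ and the fact that all eigenvalues of $A_m$ are nonzero for $m\geq 3$, so $\deg X_m\leq\deg F_m$. No gaps; your write-up is if anything slightly more explicit than the paper's about why the degree does not increase when passing from $F_m$ to $X_m$.
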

\begin{proof}
For $m=0,1,2$, the lemma can be verified explicitly using Table \ref{table-1}.
If the maximum degree of a component of $X_k$ is $d_k$  for $0\leq k < m$,
\eqref{eqn-3-4} and \eqref{eqn-3-6} imply that the degree of $F_m$
is at most 
$$\max_{0\leq j \leq m}(d_{j-1} + d_{m-j-1}),$$
where we assume $m\geq 3$ and take $d_{-1} = 0$. We use the inductive hypothesis
and note 
$$d_{j-1} + d_{m-j-1} \leq \floor{\frac{j+1}{2}} + \floor{\frac{m-j+1}{2}}
\leq \floor{\frac{m+2}{2}}$$
for $0\leq j\leq m$ to complete the proof. The second inequality
above is an equality for odd $j$. 
\end{proof}

It appears as if the degrees in Lemma \ref{lem-3-1} are actually equal
to $\floor{\frac{m+2}{2}}$. To prove as much, one has to rule out
cancellations that can happen in a variety of ways, which may or may
not be worth the trouble. Below we give a formula for the polynomial
solution $X_m$ of \eqref{eqn-3-5} that is easily derived using the
variation of constants formula and integration by parts:
\begin{equation}
X_m = - \sum_{j=0}^{\floor{\frac{m+2}{2}}}A_m^{-j-1}\frac{d^{j}F_m}{d\eta^j},
\label{eqn-3-8}
\end{equation}
for $m\geq 3$. The correctness of \eqref{eqn-3-8} can be verified
by direct substitution into \eqref{eqn-3-5}.

The lemma below summarizes the discussion in this section.

\begin{lem}
\begin{enumerate}
\item[(i)]
For the coefficients $P_{m+1}$, $Q_m$, $R_m$ shown in Table
\ref{table-1} for
$-2\leq m \leq 3$ and defined for $m\geq 3$ by \eqref{eqn-3-6} and
\eqref{eqn-3-8}, the psi series \eqref{eqn-3-1} (or \eqref{eqn-1-2})
satisfy the Lorenz system \eqref{eqn-1-1} formally. The location
of the singularity $t_0$ is arbitrary and two undetermined constants,
$C$ and $D$, occur in the psi series. The constant $C$ and $\eta$
always occur in the group $(\eta+C)$.
\item[(ii)]
Another formal solution is obtained by flipping the signs of all
the $P$s and the $Q$s while leaving the $R$s unchanged.
\item[(iii)]
For the solution to be formally valid, $\eta$ can be defined as
$\log(b (t-t_0))$ for any complex number $b$ with $\abs{b} = 1$.
\end{enumerate}
\label{lem-3-2}
\end{lem}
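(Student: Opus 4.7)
Part (i) is a reorganization of the derivation carried out through Section 3. First I would substitute the ansatz (3.1) into (1.1), using $d\eta/dt = 1/(t-t_0)$ to differentiate $P_m(\eta)(t-t_0)^m$, and collect like powers of $(t-t_0)$ to obtain (3.2a)--(3.2c). Balancing the leading orders $(t-t_0)^{-2}$ in (3.2a) and $(t-t_0)^{-3}$ in (3.2b)--(3.2c) yields algebraic relations whose only polynomial-in-$\eta$ solutions are the constants in (3.3); fix the first choice. The next balance gives (3.4). For $m \geq 0$, the coefficient of $(t-t_0)^m$ in (3.2a) and of $(t-t_0)^{m-1}$ in (3.2b)--(3.2c) yields the inhomogeneous linear ODE (3.5) with $A_m$ and $F_m$ as in (3.6). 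The eigenvalues of $A_m$ are $-m+2$, $-m$, $-m-3$. Diagonalizing (3.5) decouples it into scalar equations $d\xi/d\eta = \alpha \xi + f(\eta)$; for $\alpha \neq 0$ a unique polynomial solution exists with $\deg \xi = \deg f$, while for $\alpha = 0$ the solution is $\int f\,d\eta$ plus an arbitrary constant. Hence $X_m$ exists as a polynomial in $\eta$ for every $m$, so (3.1) is a formal solution. Formula (3.8) is verified by direct substitution into (3.5).

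The undetermined constants arise precisely at the two values of $m$ at which $A_m$ is singular: $C$ at $m=0$ (eigenvalue $-m = 0$) and $D$ at $m=2$ (eigenvalue $-m+2 = 0$). The main obstacle, and the step deserving explicit care, is the $m=2$ case. A priori $F_2$ has degree $2$ in $\eta$, so one would expect $X_2$ to have degree $3$; but, as referenced before the lemma and in (2.9) of Tabor--Weiss, the component of $F_2$ along the left null vector of $A_2$ vanishes. Without this cancellation, the degree-$3$ term would couple back to the kernel constant and prevent $D$ from being truly independent of $C$ and $t_0$. I would verify the cancellation by assembling $F_2$ from the $m \leq 1$ entries of Table \ref{table-1} and computing the relevant inner product---a finite, deterministic calculation. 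Throughout the recursion the variable $\eta$ enters the coefficients only through the combination $\eta + C$ because the sole new $\eta$-dependence introduced after $m=0$ comes from integrating along the kernel direction, which is then propagated through (3.8) by differentiation alone.

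Part (ii) is immediate from the involutive symmetry $(x,y,z) \mapsto (-x,-y,z)$ of (1.1): the right-hand sides of the first two equations change sign while the third is unchanged, so $(x,y,z)$ satisfies (1.1) if and only if $(-x,-y,z)$ does. Applied to the formal series of (i), this changes the sign of every $P_m$ and every $Q_m$ while leaving each $R_m$ unchanged, which in particular recovers the second sign choice in (3.3).

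Part (iii) follows from translation invariance of the recursion in $\eta$. Replacing $\eta = \log(b(t-t_0))$ by $\tilde\eta = \log(b'(t-t_0))$ with $\abs{b'}=1$ amounts to $\tilde\eta = \eta + c$ for a constant $c$ on any chosen branch. Because (3.5)--(3.6) depend on $\eta$ only through $d/d\eta$ and, after $m=0$, through the group $(\eta + C)$, the shift $\eta \mapsto \eta + c$ is absorbed by the reparametrization $C \mapsto C - c$. Since $C$ is itself undetermined, the formal construction is valid for every $b$ with $\abs{b}=1$.
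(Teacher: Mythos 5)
Your proposal is correct and follows essentially the same route as the paper, which establishes part (i) by the same leading-order balance, the eigenvalue analysis of $A_m$ with the degenerate cases $m=0$ and $m=2$ producing $C$ and $D$, part (ii) by the symmetry $(x,y,z)\mapsto(-x,-y,z)$ flipping the signs of the first two components of $F_m$, and part (iii) by observing that the choice of $b$ only shifts $\eta$ by a constant absorbed into $C$. One small correction to a side remark: the $m=2$ cancellation is not what makes $D$ undetermined (the constant of integration of the $\alpha=0$ scalar equation is free whether or not the forcing component vanishes); it is needed only so that $X_2$ is quadratic rather than cubic in $\eta$, which is what the degree bound of Lemma \ref{lem-3-1} and the truncation of the sum in \eqref{eqn-3-8} depend on.
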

\begin{proof}
For the part about flipping signs, note that the Lorenz system is
unchanged by the transformation $(x,y,z)\rightarrow(-x,-y,z)$.
More specifically, note that flipping signs of the $P$s and the
$Q$s changes the sign of the first two components of $F_m$ in
\eqref{eqn-3-6} but not that of the third component. 

This other formal solution accounts for the second possibility
in \eqref{eqn-3-3}.

\end{proof}

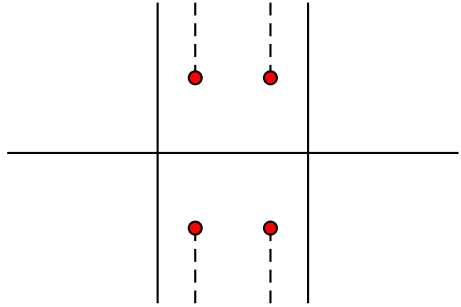
\begin{figure}
\begin{center}

\begin{pspicture}(3,-2.5)(11.0,2.5)
\psset{xunit=1cm, yunit=1cm}
%\showgrid
\psline(4,0)(10,0)
\psline(6,-2)(6,2)
\psline(8,-2)(8,2)
\psline[linestyle=dashed](6.5,1)(6.5,2)
\psline[linestyle=dashed](7.5,1)(7.5,2)
\psline[linestyle=dashed](6.5,-1)(6.5,-2)
\psline[linestyle=dashed](7.5,-1)(7.5,-2)
\pscircle[fillstyle=solid,fillcolor=red](6.5,1){.1}
\pscircle[fillstyle=solid,fillcolor=red](7.5,1){.1}
\pscircle[fillstyle=solid,fillcolor=red](6.5,-1){.1}
\pscircle[fillstyle=solid,fillcolor=red](7.5,-1){.1}
\end{pspicture}
\end{center}
\caption[xyz]{Schematic plot of the location of the singularities in
the $t$-plane for an orbit
such as $AB$. The singularities are shown as red spots and
the branch cuts are dashed. Only singularities within a single period
are shown in the $t$-plane (compare Figure \ref{fig-1}).}
\label{fig-3}
\end{figure}

If the psi series singularity is an analytic continuation of a
solution that is real for real $t$, the location $t_0$ of the
singularity must be off the real line (see Section 5). According as
$\Im(t_0) < 0$ or $\Im(t_0) > 0$, the choices $b = -i$ or $b = i$ 
give branch cuts that do not intersect the real
line, as shown in Figure \ref{fig-3}.

\section{Proof of convergence}
Hille's \cite{Hille1973} proof of the convergence of psi series
solutions relies on the formula $$X_m(\eta) = \int_{-\infty}^\eta
e^{(\eta-s)A_m} F_m(s) ds$$ for the solution $X_m$ of \eqref{eqn-3-5}
which is polynomial in $\eta$.  A similar formula is
fundamental to the approximation of strange attractors, including
Lorenz's, by algebraic sets in the work of Foias, Temam and others
\cite{FoiasJollyKukavicaTiti2001,FoiasTemam1988}.

Our proof of convergence does not use Hille's formula, but instead
relies on the Laplace transform and other devices. In the second part
of this section, we remark that our technique will likely give simpler
proofs for certain theorems of Hille and Smith. In one instance, our
technique can probably be used to prove a theorem that has been stated but not
proved completely.

\subsection{Psi series solutions of the Lorenz system}

If $p$ is a polynomial in $(\eta+C)$, we define $\abs{p}$ as the sum
of the absolute values of its coefficients.  Since $\eta$ and $C$
always occur in the group $(\eta+C)$, we can think of $C$ as being
subsumed by $\eta$.  $\abs{X_m}$ is defined as
the maximum of $\abs{P_{m+1}}$, $\abs{Q_m}$ and $\abs{R_m}$. For $m
\geq 2$, $\abs{X_m}$ will depend upon the undetermined constant $D$.
The key to the proof of convergence of the psi series \eqref{eqn-3-1}
is a bound of the form $\abs{X_m} < K_1 K_2^m$, where $K_1$ 
and $K_2$ are positive constants
 that depend upon the undetermined parameter
$D$.

For $F_m$ defined by \eqref{eqn-3-6}, $\abs{F_m}$ is the maximum of
$\abs{\cdot}$ over its three components, each of which is a polynomial
in $(\eta+C)$. We begin with the following easy lemma.

\begin{lem}
For $m\geq 3$,
$$\abs{F_m} \leq 30 \abs{X_{m-1}} + 28 \abs{X_{m-2}} +
\sum_{j=1}^{m-1} \abs{X_{m-j-1}} \abs{X_{j-1}}.$$
\label{lem-4-1}
\end{lem}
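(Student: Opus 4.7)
The plan is to bound each of the three components of $F_m$ from \eqref{eqn-3-6} separately, using the triangle inequality, the sub-multiplicativity $\abs{pq} \leq \abs{p}\abs{q}$ of the coefficient norm on polynomials in $(\eta+C)$, and the definitional inequalities $\abs{P_j} \leq \abs{X_{j-1}}$, $\abs{Q_j} \leq \abs{X_j}$, $\abs{R_j} \leq \abs{X_j}$ that are immediate from $X_k = (P_{k+1},Q_k,R_k)^T$.

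First I would record sub-multiplicativity: if $p = \sum a_i(\eta+C)^i$ and $q = \sum b_i(\eta+C)^i$, the coefficient of $(\eta+C)^n$ in $pq$ has modulus at most $\sum_{i+j=n}\abs{a_i}\abs{b_j}$, so summing over $n$ gives $\abs{pq} \leq \abs{p}\abs{q}$. This reduces bounding each convolution $\sum_{j=0}^m P_j R_{m-j-1}$ or $\sum_{j=0}^m P_j Q_{m-j-1}$ to bounding $\sum_{j=0}^m \abs{X_{j-1}}\abs{X_{m-j-1}}$.

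Then I would treat each component of $F_m$ in turn. The first component gives $\abs{-10 P_m} = 10\abs{P_m} \leq 10\abs{X_{m-1}}$, already dominated by the claimed bound. For the second component, the triangle inequality and sub-multiplicativity yield
\[
\Abs{28 P_{m-1} - Q_{m-1} - \sum_{j=0}^m P_j R_{m-j-1}} \leq 28\abs{X_{m-2}} + \abs{X_{m-1}} + \sum_{j=0}^m \abs{X_{j-1}}\abs{X_{m-j-1}}.
\]
The inner sum runs from $j=0$ to $j=m$, whereas the sum appearing in the lemma runs only over $1 \leq j \leq m-1$; the two endpoint terms $j=0$ and $j=m$ each involve $\abs{X_{-1}}$, which from \eqref{eqn-3-4} equals $\max(71/9, 2, 17/9) = 71/9$. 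I would peel off these endpoints and absorb them into the coefficient of $\abs{X_{m-1}}$, giving a combined coefficient $1 + 2\cdot(71/9) = 151/9 < 30$. The third component is handled identically, producing the coefficient $8/3 + 2\cdot(71/9) = 166/9 < 30$ on $\abs{X_{m-1}}$.

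The only point requiring care is the bookkeeping for the two endpoint terms in each convolution, which is exactly why the rather slack constant $30$ appears in the statement (rather than something tighter). I do not foresee any genuine technical obstacle: once sub-multiplicativity and the trivial componentwise bounds are in place, the argument is a short componentwise verification, and the hypothesis $m \geq 3$ serves only to guarantee that the inner sum $\sum_{j=1}^{m-1}$ is nonempty and that $P_{m-1},Q_{m-1},R_{m-1}$ are indexed by quantities already controlled by some $\abs{X_k}$ with $k \geq -1$.
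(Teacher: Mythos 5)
Your proposal is correct and takes essentially the same route as the paper: triangle inequality plus submultiplicativity of $\abs{\cdot}$ give $\abs{F_m} \leq 10\abs{X_{m-1}} + 28\abs{X_{m-2}} + \sum_{j=0}^{m}\abs{X_{j-1}}\abs{X_{m-j-1}}$, after which the $j=0$ and $j=m$ endpoint terms are absorbed into the coefficient of $\abs{X_{m-1}}$ using the smallness of $\abs{X_{-1}}$. The only (immaterial) difference is that you track the three components separately and use the exact value $\abs{X_{-1}}=71/9$, whereas the paper collapses everything into one bound and simply notes $\abs{X_{-1}} < 10$.
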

\begin{proof}
If $p$ and $q$ are polynomials  in $\eta+C$, $\abs{pq} \leq \abs{p}\abs{q}$
and $\abs{p+q} \leq \abs{p} + \abs{q}$. Repeated use of those inequalities
with the definition \eqref{eqn-3-6} of $X_m$ and $F_m$ gives
$$ 
\abs{F_m} \leq 10 \abs{X_{m-1}} + 28 \abs{X_{m-2}}
+\sum_{j=0}^{m} \abs{X_{m-j-1}} \abs{X_{j-1}}.
$$
The lemma results when the $j=0$ and $j=m$ terms are moved out of the summation
while using Table \ref{table-1} to note that $\abs{X_{-1}} < 10$.
\end{proof}

For matters related to the existence and uniqueness of the Laplace
transform that arise implicitly in the proof below, see
\cite{Widder1946}. In the lemma below, we only treat polynomials
in $\eta$ (assuming $C=0$), but the lemma still applies when
$\eta$ and $C$ occur in the group $(\eta+C)$ and $C\neq 0$.

\begin{lem}
Let $\alpha$ be a complex number with $\abs{\alpha} > 1$ and let
$f(\eta)$ be a polynomial in $\eta$. Let $\xi(\eta)$ be the polynomial
solution of the differential equation 
\begin{equation}
\frac{d\xi}{d\eta} = \alpha \xi + f(\eta).
\label{eqn-4-1}
\end{equation}
If the polynomial $f(\eta)$ is of degree $n$, assume $\abs{\alpha} 
\geq a (n+1/2)$ for some $a > 1$. Then
\begin{equation}
\abs{\xi} \leq \frac{1}{\abs{\alpha}}\frac{a}{a-1}\abs{f}.
\label{eqn-4-2}
\end{equation}
\label{lem-4-2}
\end{lem}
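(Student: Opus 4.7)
The plan is to represent the polynomial solution of \eqref{eqn-4-1} as a Laplace-type integral and then bound it. First, when $\Re(\alpha) > 0$, variation of constants suggests
$$\xi(\eta) = -\int_0^\infty e^{-\alpha s}\, f(\eta+s)\, ds.$$
This integral converges absolutely; since $\int_0^\infty s^j e^{-\alpha s}\, ds = j!/\alpha^{j+1}$, the right-hand side is a polynomial in $\eta$ of degree $n$; and the identity $\xi' = \alpha\xi + f$ follows from one integration by parts with $u = e^{-\alpha s}$ and $dv = f'(\eta+s)\, ds$. So this formula gives the unique polynomial solution on the half-plane $\Re(\alpha) > 0$.

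For general $\alpha$ with $|\alpha| > 1$, I would rotate the contour by substituting $s = (\bar\alpha/|\alpha|)\, t$, turning the integral into
$$\xi(\eta) = -\frac{\bar\alpha}{|\alpha|}\int_0^\infty e^{-|\alpha| t}\, f\!\left(\eta + \frac{\bar\alpha}{|\alpha|}\, t\right) dt,$$
which converges absolutely for every $\alpha \neq 0$ and, by Cauchy's theorem applied to the entire integrand on a sector closed by a large quarter-arc, agrees with the previous representation wherever both make sense. The same integration-by-parts argument shows this rotated formula still solves \eqref{eqn-4-1} and produces the unique polynomial solution (since $\alpha \neq 0$ rules out the homogeneous exponential).

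Next I would pull the coefficient-sum norm inside the integral. Writing $w_t = (\bar\alpha/|\alpha|)\, t$ so that $|w_t| = t$, and expanding $f(\eta + w_t) = \sum_{k=0}^n c_k (\eta + w_t)^k$ as a polynomial in $\eta$, the triangle inequality applied term by term to the binomial expansion gives $|f(\eta+w_t)| \leq \sum_{k=0}^n |c_k|(1+t)^k$. Combined with $(1+t)^k \leq e^{kt}$, this yields
$$|\xi| \leq \sum_{k=0}^n |c_k|\int_0^\infty e^{-(|\alpha|-k)t}\, dt = \sum_{k=0}^n \frac{|c_k|}{|\alpha|-k}.$$
The hypothesis $|\alpha| \geq a(n+\tfrac{1}{2})$ then forces $k \leq n \leq |\alpha|/a$ for every $k$ appearing in the sum, so $|\alpha| - k \geq |\alpha|(a-1)/a$, and \eqref{eqn-4-2} follows.

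The main obstacle is justifying the contour rotation: the original Laplace representation converges absolutely only in a half-plane, while the conclusion of the lemma must cover any $\alpha$ with $|\alpha| > 1$. Once that analyticity argument is in hand, the rest is a routine coefficient estimate with a geometric-series flavor, and the structure transparently exposes why the polynomial degree condition $|\alpha| \gtrsim n$ is precisely what is needed.
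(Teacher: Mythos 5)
Your proof is correct, but it takes a genuinely different route from the paper's. The paper applies the Laplace transform to the differential equation itself (transforming $\eta$ to $s$), solves for $\hat{\xi}(s)$, uses a partial-fraction identity together with a choice of $\xi(0)$ that cancels all the $1/(s-\alpha)$ terms, reads off the coefficients of the polynomial solution explicitly, and then bounds each one via the combinatorial estimate $k!/(j!\abs{\alpha}^{k-j}) \leq ((k+j+1)/2\abs{\alpha})^{k-j} \leq a^{-(k-j)}$ before summing a geometric series. You instead represent the polynomial solution by the resolvent-type integral $\xi(\eta) = -\int_0^\infty e^{-\alpha s} f(\eta+s)\,ds$, rotate the contour onto the ray where the exponential has modulus $e^{-\abs{\alpha}t}$, and replace the paper's entire combinatorial step by the single inequality $(1+t)^k \leq e^{kt}$ under the integral sign, after which the hypothesis $\abs{\alpha} \geq a(n+1/2)$ enters only through $\abs{\alpha}-k \geq \abs{\alpha}(a-1)/a$. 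The two representations are in fact equivalent: expanding your integral term by term gives $\xi = -\sum_j f^{(j)}(\eta)/\alpha^{j+1}$, which is the scalar version of the paper's formula \eqref{eqn-3-8}. What your route buys is a cleaner estimation step with no factorial bookkeeping; what it costs is the analytic justification of the rotated contour, though as you yourself observe this can be bypassed entirely, since a direct integration by parts shows the rotated integral solves \eqref{eqn-4-1} for every $\alpha \neq 0$ and uniqueness of the polynomial solution follows from $\alpha \neq 0$. Two minor points: pulling the coefficient-sum norm inside the integral should be stated as an application of the triangle inequality to each of the finitely many $\eta$-coefficients (each of which is an absolutely convergent integral in $t$), and your hypothesis actually gives $n \leq \abs{\alpha}/a - 1/2$, so the inequality $n \leq \abs{\alpha}/a$ you invoke holds strictly and the final bound follows with room to spare.
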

\begin{proof}
Let $f(\eta) = f_0 + f_1 \eta + \cdots + f_n \eta^n$. To take
the Laplace transform of \eqref{eqn-4-1}, we multiply \eqref{eqn-4-1}
by $e^{-\eta s}$ and integrate from $\eta=0$ to $\eta=\infty$. We get
$$
s \hat{\xi}(s) - \alpha \hat{\xi}(s) = \xi(0) + \frac{f_0}{s}
+\frac{1!f_1}{s^2} 
+\frac{2!f_2}{s^3}+
\cdots + \frac{n!f_n}{s^{n+1}}.
$$
Rearranging, we have
$$
\hat{\xi}(s) = \frac{\xi(0)}{s-\alpha} + \frac{f_0}{(s-\alpha)s}
+\frac{1!f_1}{(s-\alpha)s^2}+\cdots+\frac{n!f_n}{(s-\alpha)s^{n+1}}.
$$
All terms on the right hand side above except the first are rewritten
using the identity
$$
\frac{1}{(s-\alpha)s^k} 
= \frac{1}{\alpha^k(s-\alpha)} - \frac{1}{\alpha^k s}
-\frac{1}{\alpha^{k-1}s^2}- \cdots - \frac{1}{\alpha s^k}.
$$
In the resulting expression, $\xi(0)$ is chosen to cancel
all the $1/(s-\alpha)$ terms to get a polynomial solution. We
then have
\begin{align}
\hat{\xi}(s) &= -\sum_{k=0}^n k!f_k\Biggl(\frac{1}{\alpha^{k+1}s}
+ \frac{1}{\alpha^k s^2} + \cdots + \frac{1}{\alpha s^{k+1}} 
\Biggr) \label{eqn-4-xa}\\
&= -\sum_{k=1}^{n+1}\frac{1}{s^k} \Biggl(\frac{(k-1)!f_{k-1}}{\alpha}
+\frac{k!f_{k}}{\alpha^2}+\cdots+\frac{n!f_n}{\alpha^{n+2-k}}
\Biggr).
\label{eqn-4-xb}
\end{align}
The coefficients of $\xi(\eta)$ are evident from inspecting the
summations \eqref{eqn-4-xa} and \eqref{eqn-4-xb}. 
From the summation \eqref{eqn-4-xa} and the
inverse Laplace transform, we get
\begin{equation}
\abs{\xi} \leq  
\sum_{k=0}^n
\abs{f_k}\Biggl(\frac{k!}{0!\abs{\alpha^{k+1}}}
+\frac{k!}{1!\abs{\alpha^k}}
+ \cdots 
+\frac{k!}{(k-1)!\abs{\alpha^2}} 
+\frac{k!}{k!\abs{\alpha}}
\Biggr) = \sum_{k=0}^n
\frac{\abs{f_k}}{\abs{\alpha}}
\Biggl(\sum_{j=0}^k \frac{k!}{j!\abs{\alpha^{k-j}}}\Biggr).
\label{eqn-4-xc}
\end{equation}
To clarify the calculation that gives \eqref{eqn-4-xc}, let us
consider the special case $d\xi/d\eta = \alpha\xi + \eta^k$.  Its
unique polynomial solution is $\xi = \eta^k/\alpha - k
\eta^{k-1}/\alpha^2 -\cdots- k!/\alpha^{k+1}$ and this $\abs{\xi}$
corresponds to the 
$k$th term in \eqref{eqn-4-xc}.

Next we bound $k!/j!\abs{\alpha^{k-j}}$ for $0\leq k \leq n$ and
$0\leq j \leq k$.
\begin{align*}
\frac{k!}{j!\abs{\alpha^{k-j}}}
&= \abs{\alpha^{j-k}}k(k-1)\ldots(j+1)\\
&=\abs{\alpha^{j-k}} (k(j+1))((k-1)(j+2))
((k-2)(j+3))\ldots L\\
&\leq \Biggl(\frac{k+j+1}{2\abs{\alpha}}\Biggr)^{k-j}
\leq \Biggl(\frac{k+1/2}{\abs{\alpha}}\Biggr)^{k-j}
\leq \Biggl(\frac{n+1/2}{\abs{\alpha}}\Biggr)^{k-j}\\
& \leq 1/a^{k-j}.
\end{align*}
In the second line above, the last factor $L$ is either $(k+j+1)/2$ or
$((k+j)(k+j+2)/4)$. The first inequality in the third line is obtained
by applying the inequality $xy \leq
((x+y)/2)^2$ repeatedly. The inequality in the last line uses the
assumption $\abs{\alpha} \geq a (n+1/2)$ made in the statement of the
lemma.

Returning to \eqref{eqn-4-xc}, we have
$$
\abs{\xi} \leq \frac{\abs{f}}{\abs{\alpha}} 
(1 + 1/a + 1/a^2 + \cdots),
$$
which completes the proof.
\end{proof}

The inequality in the lemma below is not strict mainly because
$\abs{F_m}=0$ is not ruled out.

\begin{lem}
For $m \geq 8$, $\abs{X_m} \leq 192 \abs{F_m}/(m-2)$.
\label{lem-4-3}
\end{lem}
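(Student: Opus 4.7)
\medskip
\noindent\textbf{Proof plan.} The plan is to diagonalize $A_m$ once and for all, reduce the system \eqref{eqn-3-5} to three decoupled scalar equations, apply Lemma \ref{lem-4-2} componentwise, and then translate back. A crucial preliminary observation is that $A_m = A_0 - mI$ (read off \eqref{eqn-3-6}), so every $A_m$ shares the eigenvectors of $A_0$, with eigenvalues simply shifted: they are $-m+2$, $-m$, and $-m-3$. Let $V$ be a fixed matrix of chosen eigenvectors of $A_0$ and write $A_m = V D_m V^{-1}$ with $D_m = \mathrm{diag}(-m+2, -m, -m-3)$. Setting $Y_m = V^{-1} X_m$ and $G_m = V^{-1} F_m$, the system becomes three uncoupled scalar ODEs $\xi_i' = \alpha_i \xi_i + g_i$ with $\alpha_i \in \{-m+2, -m, -m-3\}$.

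I would then apply Lemma \ref{lem-4-2} to each scalar equation. By Lemma \ref{lem-3-1}, the components of $F_m$ are polynomials in $\eta + C$ of degree at most $\floor{(m+2)/2}$, and the same bound applies to each $g_i$ (which is a scalar linear combination of these components). The smallest eigenvalue in magnitude is $|{-m+2}| = m-2$. Choose $a = 12/11$: the condition $|\alpha_i| \geq a(n + 1/2)$ in the worst case reads
\[
m - 2 \geq \frac{12}{11}\Bigl(\floor{\tfrac{m+2}{2}} + \tfrac{1}{2}\Bigr),
\]
which one checks (splitting into even and odd $m$) is equivalent to $m \geq 8$. This is precisely where the hypothesis of the lemma enters. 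With $a = 12/11$ the constant $a/(a-1)$ equals $12$, so Lemma \ref{lem-4-2} yields
\[
|\xi_i| \leq \frac{12}{|\alpha_i|} |g_i| \leq \frac{12}{m-2}|g_i|.
\]

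Finally, undo the change of basis. Since the norm $|\cdot|$ on polynomials in $\eta + C$ satisfies $|p+q| \leq |p|+|q|$ and $|cp| = |c||p|$ for scalar $c$, extending $|\cdot|$ to polynomial vectors by taking the maximum over components gives $|G_m| \leq \|V^{-1}\|_\infty |F_m|$ and $|X_m| \leq \|V\|_\infty |Y_m|$, where $\|\cdot\|_\infty$ denotes the maximum row sum of absolute values. Chaining the three bounds,
\[
|X_m| \leq 12\, \|V\|_\infty \|V^{-1}\|_\infty \cdot \frac{|F_m|}{m-2}.
\]
With a convenient normalization of the columns of $V$, one can compute $\|V\|_\infty \|V^{-1}\|_\infty \leq 16$, producing the stated constant $192$.

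The analytic core of the argument is already carried by Lemma \ref{lem-4-2}; the main obstacle here is purely computational bookkeeping: explicitly diagonalize $A_0$, choose a scaling of its eigenvectors that keeps $\|V\|_\infty \|V^{-1}\|_\infty$ small, and verify that the product is at most $16$. The constant $192$ is not canonical---it reflects both this scaling choice and the tight choice $a = 12/11$ at the threshold $m = 8$.
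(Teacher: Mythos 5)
Your proposal is correct and follows essentially the same route as the paper: diagonalize via the ($m$-independent) eigenvector matrix $V$, apply Lemma \ref{lem-4-2} with $n=\floor{(m+2)/2}$ and $a=12/11$ (which is exactly where $m\geq 8$ is used), and chain the bounds through $\norm{V}_\infty\norm{V^{-1}}_\infty$. The only step you defer --- exhibiting a normalization of the eigenvectors with $\norm{V}_\infty\norm{V^{-1}}_\infty\leq 16$ --- is carried out explicitly in the paper with $V=\bigl(\begin{smallmatrix}-5i & 10i & -5i\\ -3i/2 & i & i\\ 1&1&1\end{smallmatrix}\bigr)$, for which the product equals $16$ exactly.
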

\begin{proof}
We take the matrix of eigenvectors of $A_m$ defined in 
\eqref{eqn-3-6} to be
$$
V = \begin{pmatrix}
-5i & 10 i & -5i\\
-3i/2 & i & i\\
1 & 1 & 1
\end{pmatrix},
$$
where the columns are ordered to correspond to the eigenvalues
$-m+2$, $-m$, and $-m-3$, respectively. 

If \eqref{eqn-3-5} is rewritten using a similarity transformation
that turns $A_m$ into a diagonal matrix, we get three scalar equations
$$
\frac{d\xi_{i}}{d\eta} = \alpha_{i} \xi_{i} + f_{i},
$$
for $i=1,2,3$, where $(\alpha_{1}, \alpha_{2}, \alpha_{3})
= (-m+2, -m, -m-3)$, $(f_{1}, f_{2}, f_{3})' = V^{-1} F_m$,
and $X_m = V (\xi_1, \xi_2, \xi_3)'$ (the prime denotes transpose).

To apply Lemma \ref{lem-4-2} to each of the scalar equations, we use
Lemma \ref{lem-3-1} and take $n=\floor{(m+2)/2}$. In addition, we choose
an $a>1$ such that 
$$\abs{\alpha_1} = \abs{m-2} \geq a (m+3)/2 \geq a (n+1/2).$$
The choice $a=12/11$ works for $m\geq 8$. Thus we get
$\abs{\xi_i} \leq 12\abs{f_i}/\abs{\alpha_i} \leq 12\abs{f_i}/(m-2)$
for $i=1,2,3$.

We have $\abs{f_i} \leq \norm{V^{-1}}_\infty\abs{F_m}$ for $i=1,2,3$
and $\abs{X_m} \leq \norm{V}_\infty \max(\abs{\xi}_1, \abs{\xi}_2,
\abs{\xi}_3)$.  Combining the inequalities, we get $$
\abs{X_m} \leq \frac{12}{m-2}\norm{V}_\infty \norm{V^{-1}}_\infty\abs{F_m}.
$$ 
The proof is completed by verifying that $\norm{V}_\infty \norm{V^{-1}}_\infty
=16$.
\end{proof}

The lemma below is crucial to showing that the psi series expansions
which formally satisfy the Lorenz system by Lemma \ref{lem-3-2}
are convergent. Its proof is structured to be transparent, but does not
give the best constants.

\begin{lem}
For positive constants $K_1$ and $K_2$ which depend upon the
undetermined constant $D$ of Lemma \ref{lem-3-2}, $\abs{X_m} < K_1
K_2^m$ for $m=0,1,2,\ldots$
\label{lem-4-4}
\end{lem}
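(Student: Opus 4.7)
The plan is to combine Lemmas \ref{lem-4-1} and \ref{lem-4-3} into a single recursive bound and then run a straightforward strong induction on $m$, with the base cases absorbed by a generous choice of $K_1$. For $m\geq 8$, the two lemmas together give
\begin{equation*}
\abs{X_m} \;\leq\; \frac{192}{m-2}\Biggl(30\abs{X_{m-1}} + 28\abs{X_{m-2}} + \sum_{j=1}^{m-1}\abs{X_{m-j-1}}\abs{X_{j-1}}\Biggr).
\end{equation*}
The key feature is that the $1/(m-2)$ prefactor coming from Lemma \ref{lem-4-3} decays in $m$, which is exactly what is needed to counteract the fact that the convolution sum on the right contains $m-1$ terms. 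This balance is the whole reason such a geometric bound can be expected to close under induction.

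First I would choose $K_1 := \max_{0\leq m \leq 7}\abs{X_m}$; note that $|X_m|$ for $m\geq 2$ depends explicitly on $D$ through the formulae in Section 3, so $K_1$ depends on $D$. Next I would pick $K_2\geq 1$ large enough that the inductive step works for every $m\geq 8$, namely
\begin{equation*}
\frac{192}{m-2}\Biggl(\frac{30}{K_2} + \frac{28}{K_2^2} + \frac{(m-1)K_1}{K_2^2}\Biggr) \;\leq\; 1 \qquad (m\geq 8).
\end{equation*}
Since the left side is maximized at $m=8$ (both $1/(m-2)$ and $(m-1)/(m-2)$ are decreasing in $m$), it suffices to verify this at $m=8$, which reduces to $32(30/K_2 + (28+7K_1)/K_2^2)\leq 1$. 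This is solvable by any $K_2$ of order $\sqrt{K_1}$, e.g.\ $K_2 = \max(1920,\,\sqrt{448 K_1})$; consequently $K_2$ also depends on $D$.

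With those choices, the induction is immediate. The base cases $m=0,1,\ldots,7$ hold by construction of $K_1$ (using $K_2\geq 1$). For the inductive step with $m\geq 8$, assuming $|X_k|\leq K_1 K_2^k$ for $k<m$, the displayed estimate above yields
\begin{equation*}
\abs{X_m} \;\leq\; \frac{192}{m-2}\bigl(30 K_1 K_2^{m-1} + 28 K_1 K_2^{m-2} + (m-1)K_1^2 K_2^{m-2}\bigr) \;=\; K_1 K_2^m\cdot\frac{192}{m-2}\Bigl(\tfrac{30}{K_2} + \tfrac{28+(m-1)K_1}{K_2^2}\Bigr) \;\leq\; K_1 K_2^m,
\end{equation*}
closing the induction.

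I do not anticipate a real obstacle here, since the hard analytic content has been packaged into Lemmas \ref{lem-4-1} and \ref{lem-4-3}; the only issue requiring care is the quadratic convolution term, and the point to emphasize is that the factor $1/(m-2)$ from Lemma \ref{lem-4-3} precisely compensates for its linear growth in $m$, so that taking $K_2$ of order $\sqrt{K_1}$ suffices for all $m$ simultaneously. The remaining bookkeeping is merely to track how the chosen constants depend on $D$.
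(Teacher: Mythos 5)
Your induction is sound and the constants close (up to a hair's breadth: with $K_2=\max(1920,\sqrt{448K_1})$ the $m=8$ check comes out to about $1.0002$ in the worst case, so you need slightly more room, e.g.\ $K_2=\max(2000,\sqrt{500K_1})$ --- immaterial, since the left side tends to $0$ as $K_2\to\infty$ for fixed $K_1$). However, your route is genuinely different from the paper's. The paper discards the $1/(m-2)$ decay entirely, bounding $192/(m-2)\leq 32$ for $m\geq 8$, and defines a majorant sequence $x_m = 960x_{m-1}+896x_{m-2}+32\sum_{j=1}^{m-1}x_{m-j-1}x_{j-1}$; it then shows the generating function $f(Z)=\sum x_m Z^m$ satisfies a quadratic functional equation, invokes the implicit function theorem to conclude $f$ is analytic at $Z=0$ with positive radius of convergence, and reads off the geometric bound from the Hadamard--Cauchy root formula. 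Your direct induction with the pure geometric ansatz $K_1K_2^m$ would \emph{not} close without the $1/(m-2)$ factor (the convolution contributes $(m-1)K_1^2K_2^{m-2}$, which eventually exceeds $K_1K_2^m$ for any fixed $K_2$), so your observation that the decay from Lemma \ref{lem-4-3} exactly compensates the linear growth of the convolution is the crux of your argument and is correct. What each approach buys: yours is more elementary (no generating functions, no implicit function theorem) and yields fully explicit constants $K_1,K_2$ in terms of $\max_{0\leq m\leq 7}\abs{X_m}$; the paper's majorant/generating-function method is more robust --- it proves geometric growth even for the cruder constant-coefficient recurrence, connects to the Lindel\"{o}f majorant technique the authors cite for explicit radius-of-convergence bounds, and is the form of the argument they advertise as extending to the Emden--Fowler and plane polynomial systems in Section 4.2.
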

\begin{proof}
By Lemmas \ref{lem-4-1} and \ref{lem-4-3}, we have
$$
\abs{X_m} \leq \frac{30\times 192}{m-2}\abs{X_{m-1}}
+\frac{28\times 192}{m-2}\abs{X_{m-2}}
+\frac{192}{m-2} \sum_{j=1}^{m-1} \abs{X_{m-j-1}}\abs{X_{j-1}}
$$
for $m\geq 8$. If we define $x_m = \abs{X_m}$
for $m=0,1,\ldots,7$ and, for $m\geq 8$,
\begin{equation}
x_m = 960 x_{m-1} + 896 x_{m-2} + 32 \sum_{j=1}^{m-1} x_{m-j-1} x_{j-1},
\label{eqn-4-3}
\end{equation}
then $\abs{X_m} \leq x_m$ (after noting $192/6 = 32$ and so on).

Let $f(Z) = \sum_{m=0}^\infty x_m Z^m$ be the generating function
of the $x_m$ sequence. Using \eqref{eqn-4-3}, we get
\begin{equation}
f(Z) - (c_0 + c_1 Z + \cdots + c_7 Z^7) = 960 Z f(Z) + 896 Z^2 f(Z)
+ 32 Z^2 f(Z)^2.
\label{eqn-4-4}
\end{equation}
In \eqref{eqn-4-4}, the constants $c_0\ldots c_7$ account for
the fact that the recurrence \eqref{eqn-4-3} is valid only for
$m\geq 8$. They are put in to get $x_0,x_1,\ldots,x_7$
as the coefficients of $Z^0,Z^1,\ldots,Z^7$,
respectively.  They can be determined explicitly (compare
Table \ref{table-1}); for instance,
$c_0= x_0 = \abs{X_0}$ and $c_1=x_1 - 960 x_0 = \abs{X_1} - 960 \abs{X_0}$.
Because $\abs{X_2},\ldots,\abs{X_7}$ depend upon $D$, so will 
$c_2,\ldots,c_7$.

The implicit function theorem implies the existence of a unique
analytic function with $f(0) = x_0$ that satisfies
\eqref{eqn-4-4}---if all terms of \eqref{eqn-4-4} are moved to the
left and `f(Z)' is treated as a variable, the partial derivative of
the left hand side with respect to $f$ is $1$ when $Z=0$, thus
verifying the derivative condition of the implicit function theorem.
Therefore $f(Z)$ is the generating function of the $x_m$ sequence. The
bound on $x_m$ given by the lemma follows from the Hadamard-Cauchy
root formula for the radius of convergence of $f(Z)$ around $Z=0$. If
$K_2$ is taken slightly greater than the inverse of the radius of
convergence and $K_1>0$, the bound $\abs{X_m} < K_1 K_2^m$ holds for
large enough $m$. So $K_1$ can be chosen to make the bound hold
for every $m=0,1,2,\ldots$

An explicit lower bound for the radius of convergence in terms of 
$c_0,\ldots,c_7$ can be determined using  the implicit
function theorem proved by Lindel\"{o}f using his majorant technique
\cite[p. 63]{Hille1976} \cite{Lindelof1899}.
\end{proof}

We are now ready to prove convergence of the formal psi series of
Lemma \ref{lem-3-2}.

\begin{thm}
Consider the formal psi series of Lemma \ref{lem-3-2} with
$\eta = \log(b(t-t_0))$ and $\abs{b} = 1$.  The branch cut is the segment
$$
\{t_0 - \bar{b} p|p\geq 0\}. $$
Then the  psi-series expansions for $x(t)$, $y(t)$, and $z(t)$ given
by
\eqref{eqn-1-2} or \eqref{eqn-3-1} converge uniformly
and absolutely on the disc $\abs{t-t_0} \leq r$ with
$r > 0$ and with an open neighborhood of the branch cut excluded
from the disc. In general, $r$ will depend upon both $C$
and $D$, which are the two undetermined constants in
the psi series.
\label{thm-4-5}
\end{thm}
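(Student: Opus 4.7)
The plan is to show that the formal series \eqref{eqn-3-1} converge to analytic functions satisfying \eqref{eqn-1-1}, by combining the degree bound of Lemma~\ref{lem-3-1} with the coefficient bound of Lemma~\ref{lem-4-4} to dominate each term by the $m$-th term of a geometric series. The only subtlety is that the coefficients $P_m, Q_m, R_m$ are polynomials in $\eta = \log(b(t-t_0))$, so the termwise bound will acquire a logarithmic factor that must be absorbed into the geometric decay.

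First I would combine the elementary bound $|p(\eta)| \leq |p|\,(1+|\eta+C|)^{\deg p}$ (valid for any polynomial $p$ in $(\eta+C)$) with Lemma~\ref{lem-3-1} ($\deg P_{m+1},\deg Q_m,\deg R_m \leq \lfloor(m+2)/2\rfloor$) and Lemma~\ref{lem-4-4} ($|P_{m+1}|,|Q_m|,|R_m| \leq K_1 K_2^m$). Setting $\rho = |t-t_0|$, this shows that for every sufficiently large $m$, each of $|P_{m+1}(\eta)(t-t_0)^{m+1}|$, $|Q_m(\eta)(t-t_0)^m|$, $|R_m(\eta)(t-t_0)^m|$ is bounded by $K_1 K_2^m (1+|\eta+C|)^{(m+2)/2}\,\rho^m$.

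Next I would bound $|\eta+C|$ on the region of interest. With the branch cut $\{t_0 - \bar b p : p \geq 0\}$ deleted, the principal branch of $\log(b(t-t_0))$ is single-valued and satisfies $|\eta| \leq |\log\rho| + \pi$, so $|\eta+C| \leq |\log\rho|+\pi+|C|$. Writing $L(\rho) = 1 + |\log\rho| + \pi + |C|$, the $m$-th term bound takes the shape
\[
K_1\,L(\rho)\,\bigl(K_2 \sqrt{L(\rho)}\,\rho\bigr)^m.
\]
Since $\rho^2 L(\rho) \to 0$ as $\rho \to 0$, the geometric ratio $K_2\sqrt{L(\rho)}\,\rho$ falls below $1/2$ for all $\rho \leq r$ once $r>0$ is chosen small enough in terms of $K_2$ (hence $D$) and $|C|$. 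Removing an open neighborhood of the branch cut from the closed disc $\{|t-t_0|\leq r\}$ forces a positive lower bound $\rho \geq \epsilon$ on the remainder, because $t_0$ lies at the tip of the cut; on this compact set $L(\rho)$ is uniformly bounded, and the Weierstrass $M$-test yields uniform and absolute convergence of the three psi series~\eqref{eqn-1-2}. Local uniform convergence of a series of analytic functions (each $P_m(\eta)(t-t_0)^m$ being analytic off the cut) then makes $x(t), y(t), z(t)$ analytic; term-by-term differentiation and the Cauchy product of absolutely convergent series reduce substitution into \eqref{eqn-1-1} to the formal identities already provided by Lemma~\ref{lem-3-2}.

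The principal obstacle is the factor $(1+|\eta+C|)^{m/2}$ coming from the degree bound, which grows with $m$ and must be absorbed by the $\rho^m$ decay. The argument succeeds only because $\eta$ depends on $t-t_0$ \emph{logarithmically}, so that $\rho^2(1+|\eta+C|)\to 0$ and dominates the polynomial-in-$m$ blow-up. Tracking how this balance depends on the undetermined constants, through $K_2$ and $L$, gives the stated dependence of $r$ on both $C$ and $D$.
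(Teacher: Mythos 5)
Your proposal is correct and follows essentially the same route as the paper: bound each term by $\abs{X_m}\max(1,\abs{\eta+C})^{\floor{(m+2)/2}}\abs{t-t_0}^m$ using Lemmas \ref{lem-3-1} and \ref{lem-4-4}, control $\abs{\eta}$ by $\abs{\log\abs{t-t_0}}+\pi$ off the branch cut, and shrink $r$ so the logarithmic factor is absorbed into the geometric decay (your condition $K_2\sqrt{L(\rho)}\,\rho<1/2$ plays the role of the paper's \eqref{eqn-4-5}). The concluding remarks on term-by-term differentiation and Cauchy products correspond to the paper's separate discussion preceding Theorem \ref{thm-4-6} rather than to the proof of Theorem \ref{thm-4-5} itself, but they are accurate.
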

\begin{proof}
We will give the proof for $z(t)$. The proofs for $x(t)$ and $y(t)$
are similar. 

Excluding a neighborhood of the branch cut means that a neighborhood
of $t_0$ is excluded from the domain of convergence. Therefore
$R_{-2}(t-t_0)^{-2}$ and $R_{-1}(t-t_0)^{-1}$ are both bounded on
the domain of convergence. The other reason for excluding
a neighborhood of the branch cut is to ensure that $\eta$ is
well-defined.

By the definitions of $\abs{R_m}$ and $\abs{X_m}$ given at the
beginning of this section,
\begin{align*}
\abs{R_m(\eta)(t-t_0)^m} 
&\leq \abs{R_m} \max\Bigr(1, \Abs{\log b(t-t_0)+C}^{\floor{(m+2)/2}}\Bigl) \abs{t-t_0}^m\\
&\leq \abs{X_m} \max\Bigr(1, \Abs{\log b(t-t_0)+C}^{\floor{(m+2)/2}}\Bigr) \abs{t-t_0}^m\\
&< K_1 K_2^m \max\Bigl(1, \Abs{\log b(t-t_0)+C}^{\floor{(m+2)/2}}\Bigr) \abs{t-t_0}^m,
\end{align*}
where $m\geq 0$ and $\abs{b}=1$. The first inequality above uses Lemma \ref{lem-3-1}
and the third inequality uses Lemma \ref{lem-4-4}.

Choosing an $r>0$ such that
\begin{equation}
r < 1/K_2 \quad\quad \text{and} \quad\quad 
r(\abs{\log r} + \pi + \abs{C}) < 1/K_2
\label{eqn-4-5}
\end{equation}
is sufficient to ensure uniform and absolute convergence. The $\pi$ in
\eqref{eqn-4-5} is explained by the inequality $\abs{\log b(t-t_0)}
\leq \abs{\log\abs{t-t_0}} + \pi$. A choice of $r$ in 
accord with \eqref{eqn-4-5} suffices for the convergence of the psi
series for $x(t)$ and $y(t)$ as well.
\end{proof}

A further argument is required to show that 
the convergent psi series actually satisfy the
differential equation, a point that seems to have been overlooked on
occasion. When the psi series for $x(t)$, $y(t)$ and
$z(t)$ are substituted into the Lorenz system \eqref{eqn-1-1}, the
summation and multiplication of psi series on the right hand side is
justified by standard results on rearrangements of absolutely
convergent series.  To justify the differentiation of psi series on
the left hand side, we mention that the uniform convergence of a sequence of
analytic functions on an open set implies the uniform convergence of
the derivatives on any compact subset of that open set \cite[Theorem
10.28]{Rudin1976}.  We can now state the following theorem.

\begin{thm}
The psi series for $x(t)$, $y(t)$ and $z(t)$ given by \eqref{eqn-1-2}
or \eqref{eqn-3-1}, whose formal validity is asserted by 
Lemma \ref{lem-3-2}, satisfy the Lorenz system \eqref{eqn-1-1}
in the disc $\abs{t-t_0} \leq r$, with the
branch cut excluded, for some $r>0$. In general, $r$ will depend upon both $C$
and $D$, which are the two undetermined constants in
the psi series.
\label{thm-4-6}
\end{thm}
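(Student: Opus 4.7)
The plan is to show that the convergent psi series from Theorem~\ref{thm-4-5} may be substituted into \eqref{eqn-1-1} and that each side, after legitimate rearrangement and differentiation, reduces to the formal identity already established in Lemma~\ref{lem-3-2}. As the paragraph preceding the theorem indicates, no new analysis is required beyond organizing three standard facts---Weierstrass's theorem on uniform limits of analytic functions, termwise differentiation under uniform convergence, and Cauchy multiplication of absolutely convergent series---and verifying that each applies on the domain $\Omega$ consisting of $|t-t_0|\leq r$ with $t_0$ and an open neighborhood of the branch cut deleted.

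First I would fix $r>0$ satisfying \eqref{eqn-4-5}. On $\Omega$, each psi series is a sum of functions analytic in $t$ (since $\eta=\log(b(t-t_0))$ is analytic there), converging uniformly on compact subsets by Theorem~\ref{thm-4-5}. By \cite[Theorem 10.28]{Rudin1976}, the sums $x(t)$, $y(t)$, $z(t)$ are analytic on $\Omega$ and may be differentiated termwise; since $d\eta/dt = 1/(t-t_0)$, the derivative of $R_m(\eta)(t-t_0)^m$ is $(R_m'(\eta)+mR_m(\eta))(t-t_0)^{m-1}$, and similarly for the $x$ and $y$ series. For the nonlinear products $xz$ and $xy$ on the right-hand side, absolute convergence of the factors licenses Cauchy multiplication, so collecting coefficients of $(t-t_0)^m$ yields the finite inner sums $\sum_j P_j(\eta)R_{m-j}(\eta)$ and $\sum_j P_j(\eta)Q_{m-j}(\eta)$ that already appear in \eqref{eqn-3-2b} and \eqref{eqn-3-2c}, and the resulting outer series converges absolutely on $\Omega$.

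Both sides of each Lorenz equation are then represented by absolutely convergent psi series whose polynomial coefficients agree term-by-term by Lemma~\ref{lem-3-2}, so the two sides coincide pointwise on $\Omega$. The main point requiring care---and really the only reason the theorem is stated separately from Theorem~\ref{thm-4-5}---is to keep the branch cut excluded so that $\eta$ remains analytic and both Cauchy multiplication and termwise differentiation are legitimate. With that in place there is no substantive obstacle; the argument is essentially a bookkeeping exercise built on the absolute convergence supplied by Theorem~\ref{thm-4-5}.
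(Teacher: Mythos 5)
Your proposal is correct and follows essentially the same route as the paper: the paper's own justification (given in the paragraph preceding the theorem) likewise combines the uniform and absolute convergence from Theorem~\ref{thm-4-5}, rearrangement and Cauchy multiplication of absolutely convergent series for the right-hand side, and termwise differentiation via \cite[Theorem 10.28]{Rudin1976} on the left, reducing everything to the formal identities of Lemma~\ref{lem-3-2}. Your write-up simply spells out the bookkeeping (e.g.\ $d\eta/dt = 1/(t-t_0)$ and the exclusion of the branch cut) in slightly more detail than the paper does.
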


Levine and Tabor \cite{LevineTabor1988} raised the possibility
that the locations of the singularities of an orbit of the Lorenz
system may have accumulation points in the complex $t$-plane. Theorem
\ref{thm-4-6} shows that psi series singularities cannot be 
accumulation points.

So far, in results such as Lemma \ref{lem-3-2} and Theorem
\ref{thm-4-6}, we have regarded the psi series as functions of $t$. It
is useful to consider them as functions of $\eta$, where $\eta = \log
b(t-t_0)$ gives a parametrization of the Riemann surface that
gets rid of the branch cut in the $t$-plane.  To be specific, we
assume $\Im(t_0) < 0$ and $b=-i$. In that case, we have $(t-t_0) = i \exp(\eta)$ and the
psi series
\eqref{eqn-3-1} take on the form
\begin{equation}
x(\eta) = \sum_{m=-1}^{\infty} i^m P_m(\eta)e^{m\eta} \quad
y(\eta) = \sum_{m=-2}^{\infty} i^m Q_m(\eta)e^{m\eta} \quad 
z(\eta) = \sum_{m=-2}^{\infty} i^m R_m(\eta)e^{m\eta},
\label{eqn-4-6}
\end{equation}
with $P_m,\, Q_m,\, R_m$ being polynomials in which $\eta$ always
occurs in the group $(\eta + C)$. Every time $t$ passes through the
branch cut of $\log(-i(t-t_0))$, $\eta$ increases or decreases
by $2\pi i$. Because $\eta$ and $C$ always occur in the group
$(\eta+C)$ in $P_m,\, Q_m,\, R_m$, we can allow for other branches of
$\log(-i(t-t_0))$ in the psi series of \eqref{eqn-3-1} or \eqref{eqn-1-2} by
keeping the principal branch of the logarithm in the
definition of $\eta$ and incrementing $C$ by
an integer multiple of $2\pi i$. 
The change in the estimate for the
radius of convergence $r$ of Theorem \ref{thm-4-6} for these other
branches will then be in accord with \eqref{eqn-4-5} (note that $K_2$
depends only on $D$).

\begin{figure}
\begin{center}
\begin{pspicture}(-5,-3)(5,3)
\psset{xunit=1cm,yunit=1cm}
\psline[linewidth=2pt,arrows=->](-5,0)(5,0)
\psline[linewidth=2pt,arrows=->](0,-3)(0,3)
\rput[br](4.75,0.25){$\Re(\eta)$}
\rput[tl](0.25,2.75){$\Im(\eta)$}
\psframe[linestyle=none,fillstyle=crosshatch](-5,-0.5)(-0.1,0.5)
\qline(-5,-0.5)(-0.1,-0.5)
\qline(-0.1,-0.5)(-0.1,0.5)
\qline(-0.1,0.5)(-5,0.5)
\psframe[linestyle=none,fillstyle=crosshatch](-5,0.5)(-2.5,1.5)
\qline(-5,0.5)(-2.5,0.5)
\qline(-2.5,0.5)(-2.5,1.5)
\qline(-2.5,1.5)(-5,1.5)
\psframe[linestyle=none,fillstyle=crosshatch](-5,1.5)(-3.25,2.5)
\qline(-5,1.5)(-3.25,1.5)
\qline(-3.25,1.5)(-3.25,2.5)
\qline(-3.25,2.5)(-5,2.5)
\psframe[linestyle=none,fillstyle=crosshatch](-5,-0.5)(-2.6,-1.5)
\qline(-5,-0.5)(-2.6,-0.5)
\qline(-2.6,-0.5)(-2.6,-1.5)
\qline(-2.6,-1.5)(-5,-1.5)
\psframe[linestyle=none,fillstyle=crosshatch](-5,-1.5)(-3.4,-2.5)
\qline(-5,-1.5)(-3.4,-1.5)
\qline(-3.4,-1.5)(-3.4,-2.5)
\qline(-3.4,-2.5)(-5,-2.5)
\end{pspicture}
\end{center}
\caption[xyz]{Schematic plot of domain of convergence of the psi series in the $\eta$ plane, as implied by Theorem \ref{thm-4-5}.
The shape of the region is given approximately by 
$\Re(\eta) \precsim -\log\abs{m}$ for
integers $m$ of large magnitude and 
$-\pi + 2\pi m < \Im(\eta) \leq \pi + 2\pi m$.
}
\label{fig-4}
\end{figure}
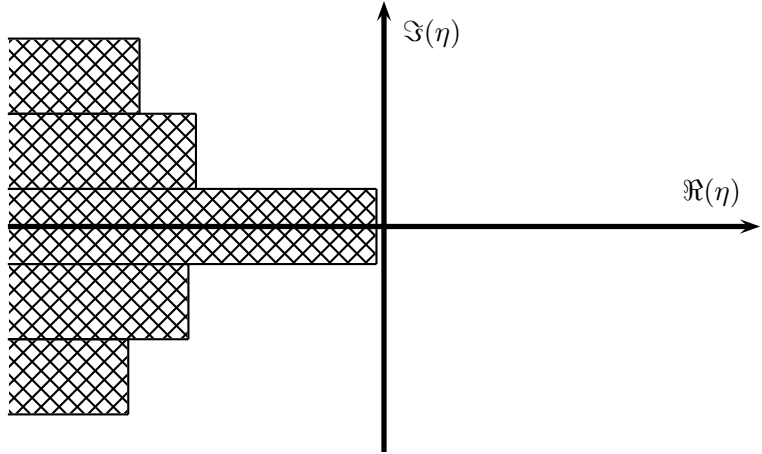

If the domain of convergence of the transformed psi series
\eqref{eqn-4-6} is considered in the $\eta$-plane, the choice
of the principal branch of $\log(-i(t-t_0))$ 
implies $-\pi < \Im(\eta) \leq \pi$ and the
$r$ estimated by Theorem \ref{thm-4-6} implies $\Re(\eta) \leq \log
r$. Thus the region of convergence of the principal branch will be a
semi-infinite rectangle in the $\eta$-plane.  To pass to other
branches, we keep $C$ fixed and allow the imaginary part of $\eta$ to
be arbitrary. For $\eta$ corresponding to different branches, one has
to use different estimates for $r$ as explained in the previous
paragraph. Therefore the estimated domain of convergence of the
transformed psi series \eqref{eqn-4-6} will be a union of
semi-infinite rectangles as in Figure \ref{fig-4}. If we start at the
principal branch of $\log(-i(t-t_0))$
and cross its branch cut $m$ times, then by
\eqref{eqn-4-5} $r \approx 1/(K_2 2\pi\abs{m})$ for large integers $m$. For
such a branch $-\pi + 2\pi m < \Im(\eta) \leq \pi + 2\pi m$ and
$\Re(\eta) \precsim -\log\abs{m}$ for convergence, which gives an
approximate idea of the shape of the domain sketched in Figure
\ref{fig-4}.

\subsection{Remarks on theorems of Hille and Smith}

In \cite{Hille1974}, Hille proved that the plane quadratic system
\begin{align*}
dx/dt &= x(a_0 + a_1 x + a_2 y)\\
dy/dt &= y(b_0 + b_1 x + b_2 y)
\end{align*}
has a logarithmic psi series singularity if
$(a_1-b_1)(a_2-b_2)/(a_1b_2-a_2b_1)$ is a positive integer. Smith
\cite{Smith1975} generalized that result to plane polynomial systems.
Smith's proof is based on a reduction to results proved early in the
20th century for Briot-Bouquet systems.  These results are summarized
in Sections 12.5 and 12.6 of Hille's book \cite{Hille1976}.

One difference between the results of Hille and Smith for plane
polynomial systems and Theorem \ref{thm-4-6} is as follows. The
singular solutions for plane polynomial systems look like simple poles
near the singular point.  The singularities of the Lorenz system
implied by Theorem \ref{thm-4-6} look like double poles.

In \cite{Hille1973}, Hille proved the existence of logarithmic psi
series solutions for the Emden-Fowler system $d^2 y/dt^2 = t^{-2/p}
y^{1+2/p}$ for $p=2$. At the end of the paper, Hille discussed the
difficulty of extending his technique and noted remarks by a referee
suggesting a proof of existence of logarithmic psi series solutions
for positive integral $2p$. Like Smith's proof for plane polynomial
systems, the suggested proof goes through a reduction to a
Briot-Bouquet system, but no complete proofs are found in the
literature as far as we are aware. The result for positive integral
$2p$ was stated as Theorem 12.4.2 in Hille's book
\cite{Hille1976}. Hille mentioned that ``the various proofs are nasty,''
while omitting them.

The proofs using reduction to Briot-Bouquet systems are difficult to
follow in their entirety, partly because they depend so crucially on
results proved long ago. It appears that use of the Laplace transform
and the implicit function theorem will give simpler proofs for
plane polynomial systems and complete proofs that are not so nasty in
the case of the Emden-Fowler system with $2p$ a positive integer.

Theorem 4 of Smith's paper \cite{Smith1975} states that all singularities
of real solutions of certain plane polynomial systems must be of
the form determined in Theorem 3 of that paper. The statement occurs
again as Theorem 12.6.3 of \cite{Hille1976}. Smith's proof begins
with an ingenious change of variables. Near the end of the proof, we find
the argument ``in the case when $\lambda > 0$ , the arbitrary
constant $c$ in (20) can be chosen to fit this solution
$\zeta(\xi)$ in the neighborhood of $\xi = 0$.'' We are unable
to follow that argument and believe it requires substantial
explication at the very least.

\section{Complex singularities and the Lorenz attractor}

If $t=t_0$ is a singularity of the Lorenz system \eqref{eqn-1-1},
the solution must diverge to infinity as the singularity is
approached.

\begin{thm}
Let $\gamma$ be a Lipshitz curve in the complex $t$ plane that
approaches $t_0$ at one of its two endpoints.  Let $(x(t), y(t),
z(t))$ be a solution of the Lorenz system
\eqref{eqn-1-1} defined
for $t\in \gamma$. If $t_0$
is a singular point, then
\begin{equation}
\liminf_{t\rightarrow t_0} \abs{t-t_0} 
\Bigl(\abs{x(t)} + \abs{y(t)} + \abs{z(t)}\Bigr)
\geq \frac{1}{8},
\end{equation}
as $t$ approaches $t_0$ along the curve $\gamma$.
\label{thm-5-1}
\end{thm}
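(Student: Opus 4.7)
The plan is to argue by contradiction, analytically continuing the solution into a neighborhood of $t_0$ by a quantitative Picard-type estimate for the complex analytic initial value problem. Throughout write $M(t) = \abs{x(t)}+\abs{y(t)}+\abs{z(t)}$ and $\rho(t) = \abs{t-t_0}$. If the conclusion fails, there exist $\alpha < 1/8$ and a sequence $t_n \in \gamma$ with $t_n \to t_0$ such that $\rho_n M_n \leq \alpha$, where $\rho_n = \rho(t_n)$ and $M_n = M(t_n)$. The goal is to show that such a sequence forces $t_0$ to be a regular point of the solution.

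First consider the easy case in which $\{M_n\}$ is bounded, say by $K$. Because the Lorenz vector field $f$ is polynomial, local existence for the complex analytic IVP $dX/dt = f(X)$ with initial data of size at most $K$ at $t_n$ yields analyticity in a disk around $t_n$ of radius bounded below by a constant $\delta_0(K) > 0$ depending only on $K$. For $n$ large, $\rho_n < \delta_0$, so $t_0$ lies in this disk; uniqueness of analytic continuation along $\gamma$ extends the given solution to a full neighborhood of $t_0$, contradicting singularity at $t_0$.

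After passing to a subsequence we may then assume $M_n \to \infty$, and here a quantitative Picard estimate is the crux. Set $\delta_n = 1/(8 M_n)$ and iterate
\[
X^{(0)}(t) \equiv X(t_n), \qquad X^{(k+1)}(t) = X(t_n) + \int_{t_n}^t f(X^{(k)}(\tau))\,d\tau
\]
on the disk $\abs{t-t_n} \leq \delta_n$, enforcing the invariant $\norm{X^{(k)} - X(t_n)}_1 \leq M_n$ (so $\norm{X^{(k)}}_1 \leq 2 M_n$). The linear coefficients of \eqref{eqn-1-1} together with $\abs{xy}+\abs{xz} \leq \abs{x}(\abs{y}+\abs{z}) \leq \norm{X}_1^2$ give the crude bound
\[
\norm{f(X)}_1 \leq 38 \norm{X}_1 + \norm{X}_1^2,
\]
so on the ball $\norm{X}_1 \leq 2M_n$ and for $M_n$ sufficiently large, $\norm{f}_1 \leq 76 M_n + 4 M_n^2 \leq 8 M_n^2$. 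The invariant is preserved since $\norm{X^{(k+1)} - X(t_n)}_1 \leq \delta_n \cdot 8 M_n^2 = M_n$. A standard analyticity/compactness argument (or a contraction estimate after slightly shrinking $\delta_n$ to enforce $\delta_n L < 1$ with $L = O(M_n)$ the Lipschitz constant of $f$ on the ball) yields convergence of the iterates to the unique analytic solution on $\abs{t-t_n} \leq \delta_n$.

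Finally, $\rho_n M_n \leq \alpha < 1/8$ gives $\rho_n < 1/(8 M_n) = \delta_n$, so $t_0$ lies strictly inside the disk of analyticity around $t_n$, contradicting that $t_0$ is singular. The main obstacle is calibrating the constant $1/8$: it is pinned down precisely by the pairing of the crude bound $\norm{f(X)}_1 \leq 38\norm{X}_1 + \norm{X}_1^2$ with the ball radius $M_n$; a sharper quadratic estimate (such as $\abs{x}(\abs{y}+\abs{z}) \leq \norm{X}_1^2/4$) would in fact yield a larger constant. The Lipschitz hypothesis on $\gamma$ enters only to guarantee rectifiability, so that approach to $t_0$ along $\gamma$ is meaningful and in the bounded case the solution values along $\gamma$ remain coherent up to $t_0$.
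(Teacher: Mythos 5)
Your proposal is correct and follows essentially the same route as the paper: both rest on a quantitative local existence estimate showing the solution is analytic on a disc of radius roughly $1/(8\,\lVert X(t)\rVert_1)$ about each point of $\gamma$, so that the singular point $t_0$ must lie outside that disc. The only difference is cosmetic --- the paper cites Theorem 8.1 of Chapter 1 of Coddington--Levinson (with $M = 52(r_t+b)+2(r_t+b)^2$ and $b = r_t$, giving radius $1/(104+8r_t)$) where you rederive the estimate by Picard iteration and organize the argument as a contradiction with a bounded/unbounded case split.
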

\begin{proof}
Denote $\abs{x(t)} + \abs{y(t)} + \abs{z(t)}$ by $r_t$.  Consider the
set of all complex $(x,y,z)$ in the region $\abs{x-x(t)} +
\abs{y-y(t)} +
\abs{z-z(t)} < b$ for some $b > 0$. Then the sum of the absolute values
of the right hand sides of the Lorenz system \eqref{eqn-1-1} is bounded
by
$$
M=52(r_t + b) + 2 (r_t+b)^2,
$$
where $10+10+28+1+8/3 < 52$ explains the first coefficient.

Theorem 8.1, Chapter 1, of \cite{CoddingtonLevinson1955} (also
see Theorem 2.3.1 of \cite{Hille1976}) with $a=\infty$ and
$M$ and $b$ as above implies that the solution admits a unique
analytic continuation to all $t'$ in the disc $\abs{t'-t} \leq R$
with 
$$
R = \frac{b}{52(r_t + b) + 2 (r_t+b)^2}.
$$ 
Taking $b=r_t$, we get $R=1/(104+8 r_t)$.

Being a singular point, $t_0$ must lie outside the disc of
analyticity. Therefore $\abs{t_0-t} (104 + 8 r_t) > 1$.
Taking the limit $t\rightarrow t_0$ along points on $\gamma$
completes the proof.

The curve $\gamma$ is assumed to be Lipshitz to ensure uniqueness
of the solution.
\end{proof}

Theorem \ref{thm-5-1} proves that as the singular point $t_0$ of the
Lorenz system is approached, the magnitude of the solution must
diverge at a rate that is at least as great as $0.125/\abs{t-t_0}$.
In fact, if the answer to Question \ref{qstn-1} is yes and the
singularities of the Lorenz system are all given by psi series of the
form
\eqref{eqn-1-2},  the divergence would be proportional to
$1/\abs{t-t_0}^2$.

Theorem \ref{thm-5-1} is used to prove the theorem below.

\begin{thm}
Consider a trajectory of the Lorenz system \eqref{eqn-1-1}
which is real for real values of $t$.  In particular, assume that the
state $(x(0), y(0), z(0))$ at $t=0$ is real.  Then there is no
singularity at any finite and real value of $t$ and the solution is
defined for all real values of $t$.
\label{thm-5-2}
\end{thm}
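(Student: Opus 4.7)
The plan is to show that any real trajectory remains bounded on every compact real interval, and then to invoke Theorem \ref{thm-5-1} to rule out a singular point at any finite real $t$. The key device is a Lyapunov-type quantity whose time derivative turns out, through a crucial cancellation, to be only quadratic in $(x,y,z)$ and hence linear in the Lyapunov quantity itself.

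First I would set
\begin{equation*}
W(t) = x(t)^2 + y(t)^2 + z(t)^2,
\end{equation*}
which is real and nonnegative along a real solution. A direct calculation from \eqref{eqn-1-1} gives
\begin{equation*}
\dot W = -20\,x^2 - 2\,y^2 - \tfrac{16}{3}\,z^2 + 76\,xy,
\end{equation*}
in which the two $xyz$ contributions coming from $-xz$ in $\dot y$ and $+xy$ in $\dot z$ cancel exactly. Bounding the cross term by $\abs{76\,xy} \leq 38(x^2+y^2)$ produces the two-sided inequality $\abs{\dot W} \leq 58\,W$.

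Next I would let $(t_-, t_+)$, with $0 \in (t_-, t_+)$, denote the maximal real interval of existence of the trajectory with initial datum $(x(0), y(0), z(0)) \in \mathbb{R}^3$. Gronwall's inequality applied to $\abs{\dot W} \leq 58\,W$ yields $W(t) \leq W(0)\,e^{58 \abs{t}}$ throughout $(t_-, t_+)$, so in particular
\begin{equation*}
\abs{x(t)} + \abs{y(t)} + \abs{z(t)} \leq \sqrt{3\,W(0)}\,e^{29\abs{t}}
\end{equation*}
is uniformly bounded on every bounded subinterval of $(t_-, t_+)$. Suppose for contradiction that $t_+ < \infty$. Then $\gamma = [0, t_+)$ is a Lipshitz curve in the complex $t$-plane whose endpoint $t_+$ must, by maximality, be a singular point of the analytic continuation, so Theorem \ref{thm-5-1} forces
\begin{equation*}
\liminf_{t \to t_+^-} (t_+ - t)\bigl(\abs{x(t)} + \abs{y(t)} + \abs{z(t)}\bigr) \geq \tfrac{1}{8},
\end{equation*}
incompatible with the uniform Gronwall bound on $[0, t_+]$. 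The same reasoning applied to $\gamma = (t_-, 0]$ rules out $t_- > -\infty$, so $(t_-, t_+) = \mathbb{R}$ and the solution has no singularity at any finite real $t$.

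The whole argument hinges on the cancellation of the $xyz$ terms in $\dot W$: this is what reduces what would otherwise be a cubic differential inequality to one linear in $W$, and hence amenable to Gronwall in both time directions. Without that cancellation — a feature tied to the conservative part of the Lorenz nonlinearity — the growth of $W$ in backward time could not be controlled by this elementary device, and a separate argument would be needed to establish backward global existence. Spotting and exploiting this cancellation is therefore the one substantive step; everything else is routine Gronwall and an appeal to Theorem \ref{thm-5-1}.
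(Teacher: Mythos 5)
Your proposal is correct and follows essentially the same route as the paper: the same Lyapunov quantity $x^2+y^2+z^2$, the same cancellation of the cubic terms, the same constant $58$ in the Gronwall bound, and the same appeal to Theorem \ref{thm-5-1} to force divergence at a putative finite-time singularity. Your version merely makes the contradiction on the maximal interval of existence more explicit than the paper's somewhat discursive remark about oscillatory singularities.
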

\begin{proof}
Let $Q = x^2 + y^2 + z^2$. From \eqref{eqn-1-1}, we have
$$ dQ/dt =
2(-10 x^2 - y^2 - 8z^2/3 + 38 xy). 
$$
The matrix $1$-norm of the symmetric form on the right hand side is bounded by
$58$ and so are the magnitudes of its eigenvalues. Therefore,
$\abs{dQ/dt} < 58 Q$ and 
$$
Q(t) \leq Q(0) + 58\int_0^{\abs{t}}Q(s) ds.
$$
At this point it appears as if the proof can be completed using
the Gronwall inequality (Theorem 1.6.6 of \cite{Hille1976}) to deduce
that $Q(t) \leq Q(0)\exp(58\abs{t})$. However, the bound on $Q(t)$
holds only if we assume the existence of the solution, which is what
we set out to prove.

In circumstances such as these, oscillatory singularities for which
the solution does not tend to a limit as the singular point is approached
must be ruled out --- an important point that goes back to
Painlev\'{e}
\cite[Chapter 3]{Hille1976}. Theorem \ref{thm-5-1} forces the
norm of the solution to diverge near a singular point thus making it
possible to complete the proof.
\end{proof}

Theorem \ref{thm-5-2} is implied by Theorem 2.4, part $(i)$, of
\cite{FoiasJollyKukavicaTiti2001}. 
In fact, Theorem 2.4 of \cite{FoiasJollyKukavicaTiti2001} is a sharper
result as it implies that $Q(t) \leq C \exp(20 \abs{t})$ for some
constant $C$ independent of $t$.  We have given a proof that brings
out the connection to the nature of the singular points.

\begin{table}
\begin{center}
\begin{tabular}{c|c}
$AB$ & $\pm i0.1714501006$ \\ \hline $AAB$ & $\pm i0.1617621257$ \\
\hline $AAAB$ & $\pm i0.1563426260$ \\ \hline $AABB$ & $\pm
i0.1636066901$
\end{tabular}
\end{center}
\caption[xyz]{Imaginary parts of the singular points
closest to the real line in the $t$-plane.}
\label{table-2}
\end{table}

So far we know that the Lorenz system has singularities represented by
logarithmic psi series and that the solution must diverge as a
singularity is approached. But do solutions such as the one shown in
Figure \ref{fig-1} have complex singularities and are they represented
by psi series?

Using numerical methods based on \cite{PaulsFrisch2007}, we found the
complex singularities closest to the real line of a few solutions
listed in Table \ref{table-2}.  Those solutions are all of course real
for real $t$. They are assigned the labels $AB$, $AAB$, $AAAB$ and
$AABB$ following the convention explained in the caption to Figure
\ref{fig-1}. From Table \ref{table-2}, we see that the complex
singularities are located at a distance greater than $0.037$ from the
real line, in agreement with Theorem 2.3 of Foais et al.\hspace{-.1cm}
\cite{FoiasJollyKukavicaTiti2001}. In addition to computing the location
of the singularities, we have verified numerically that their form
matches the formal development of psi series given in Section 3. This
numerical work will be described in detail elsewhere.

\section{Conclusion}
Given that the Lorenz system \eqref{eqn-1-1} has resisted mathematical
analysis on the real line, one may say that it is natural to think of
$t$ as a complex variable and $x,y,z$ as analytic functions of $t$.
When the solutions of the Lorenz system are viewed as analytic
functions, it is natural to begin their investigation by looking at
their singularities.  We have given a complete formal development of
singularities in the complex $t$-plane, proved convergence of the psi
series representations using a new technique, and proved that the psi
series indeed satisfy the Lorenz system. 
The development of the analytic theory
appears to be a fascinating avenue for further investigations.

Our suggestion that the mathematical analysis of the Lorenz system 
\eqref{eqn-1-1} could be a problem in analytic function theory
is an attempt to complete the circle,
because the geometrical theory of differential
equations, in which the Lorenz system is a famous example, sprang out
of problems in analytic function theory---a fact that is not too
well-known.
More specifically, the stable
manifold theorem, which is undoubtedly fundamental to the geometrical
theory, was first proved to understand the solution of $dz/dw =
P(z,w)/Q(z,w)$ in a neighborhood of $z=w=0$ when $P$ and $Q$ are
bivariate polynomials with $P(0,0)=Q(0,0)=0$
\cite[p. 97]{Hille1976}\cite[1880]{Poincare1880}.

The properties of analytic functions $x(t)$ which satisfy the
nonlinear Riccati equation $dx/dt = f_0(t) + f_1(t)x + f_2(t) x^2$,
where the $f_i(t)$ are rational in $t$, is a well-studied topic.  All
the movable singularities of the Riccati equation are poles and the
dependence of its solution on the undetermined constant is given by a
fractional linear transformation. For the Lorenz system some of the
movable singularities have psi series representations of the form
determined in Section 4. The dependence of these psi series solutions
on the undetermined constants is much more complicated than for the
Riccati equation.

Another well-studied topic is the classification of
second order nonlinear systems all of whose movable singularities are
poles.  The Painlev\'{e} classification has been presented with
lexicographic thoroughness by Ince \cite{Ince1956}. There appear to be
few classification results for third order systems such as the Lorenz
system. Studying a specific system will probably sidestep many
difficulties of the classification problem. In any event, the movable
singularities of the Lorenz system are not poles.

\section{Acknowledgments}
Many thanks to one of the referees for finding an error in Section 4
and for graciously allowing us to correct it.  Thanks to Jeff
Lagarias for valuable pointers related to content and exposition, and
for generously sharing his notes on the Lorenz system.  
Thanks to Nick Trefethen for a number of valuable suggestions and
comments, some of which we hope to address fully later. Thanks to
B. Deconinck, B. Eckhardt, R. Goodman, A. Iserles and M. Slemrod for
helpful discussions. DV thanks the Mathematics Department, Indian
Institute of Science, and the Computing Laboratory, Oxford University,
for their hospitality.

\bibliography{references} \bibliographystyle{siam}

\end{document}